\newtheorem{thm}{Theorem}[section]
\newtheorem{lem}[thm]{Lemma}
\newtheorem{cor}[thm]{Corollary}
\newtheorem{prop}[thm]{Proposition}
\theoremstyle{definition}
\newtheorem{example}[thm]{Example}
\theoremstyle{definition}
\theoremstyle{definition}
\newtheorem{defn}[thm]{Definition}
\theoremstyle{definition}
\newtheorem{remark}[thm]{Remark}
\newcommand{\mc}[1]{\mathcal{#1}}
\newcommand{\e}[1]{\emph{#1}}
\newcommand{\la}{\langle}
\newcommand{\ra}{\rangle}
\newcommand{\rmv}[1]{}
\newcommand{\LO}{L^1(G)}
\newcommand{\LT}{L^2(G)}
\newcommand{\LTc}{L^2(G)_c}
\newcommand{\LI}{L^{\infty}(G)}
\newcommand{\BH}{\mc{B}(H)}
\newcommand{\Th}{\mc{T}(H)}
\newcommand{\BLT}{\mc{B}(L^2(G))}
\newcommand{\TC}{\mc{T}(L^2(G))}
\newcommand{\NCBLTD}{\mc{CB}^{\sigma,VN(G)}_{L^{\infty}(G)}(\mc{B}(L^2(G)))}
\newcommand{\Mcb}{M_{cb}A(G)}
\newcommand{\Qcb}{Q_{cb}(G)}
\newcommand{\vphi}{\varphi}
\newcommand{\lm}{\lambda}
\newcommand{\Gam}{\Gamma}
\newcommand{\om}{\omega}
\newcommand{\ten}{\otimes}
\newcommand{\oten}{\overline{\otimes}}
\newcommand{\pten}{\widehat{\otimes}}
\newcommand{\hten}{\otimes^h}
\newcommand{\iten}{\otimes^{\vee}}
\newcommand{\whten}{\otimes^{w^*h}}
\newcommand{\id}{\textnormal{id}}
\newcommand{\ev}{\textnormal{ev}}
\newcommand{\h}[1]{\widehat{#1}}
\newcommand{\wh}[1]{\widehat{#1}}
\newcommand{\Ad}{\mathrm{Ad}}
\newcommand{\ep}{\varepsilon}
\providecommand{\norm}[1]{\lVert#1\rVert}
\newcommand{\C}{\mathbb{C}}
\newcommand{\N}{\mathbb{N}}
\newcommand{\Z}{\mathbb{Z}}
\newcommand{\T}{\mathbb{T}}
\DeclareSymbolFont{lettersA}{U}{txmia}{m}{it}
\DeclareMathSymbol{\W}{\mathord}{lettersA}{151}
\begin{document}

\title[]{A non-commutative Fej\'{e}r theorem for crossed products, the approximation property, and applications} 
%\author{Mahmood Alaghmandan}
%\email{mahmood.alaghmandan@carleton.ca}
\author{Jason Crann$^1$}
\email{jasoncrann@cunet.carleton.ca}
\author{Matthias Neufang$^{1,2}$}
\email{matthias.neufang@carleton.ca}
\address{$^1$School of Mathematics and Statistics, Carleton University, Ottawa, ON, Canada H1S 5B6}
\address{$^2$Universit\'{e} Lille 1 - Sciences et Technologies, UFR de Math\'{e}matiques, Laboratoire de Math\'{e}\-matiques Paul Painlev\'{e} - UMR CNRS 8524, 59655 Villeneuve d'Ascq C\'{e}dex, France}

\keywords{Crossed products; locally compact groups; the approximation property; exactness}
\subjclass[2010]{47L65, 46L55, 46L07}

%\begin{abstract} \end{abstract}

\begin{spacing}{1.0}

\maketitle
\begin{abstract} 
We prove that a locally compact group has the approximation property (AP), introduced by Haagerup--Kraus \cite{HK}, if and only if a non-commutative Fej\'{e}r theorem holds for its associated $C^*$- or von Neumann crossed products. As applications, we answer three open problems in the literature. Specifically, we show that any locally compact group with the AP is exact. This generalizes a result by Haagerup--Kraus \cite{HK}, and answers a problem raised by Li in \cite{Li}. We also answer a question of B\'{e}dos--Conti \cite{BC1} on the Fej\'{e}r property of discrete $C^*$-dynamical systems, as well as a question by Anoussis--Katavolos--Todorov \cite{AKT2} for all locally compact groups with the AP. In our approach, we develop a notion of Fubini crossed product for locally compact groups and a dynamical version of the slice map property.
\end{abstract}

\section{Introduction} 
In his seminal work on Fourier series in the early 20$^{th}$ century, Fej\'{e}r established, under appropriate conditions, the approximation of a function by the Ces\`{a}ro sum of its Fourier series \cite{F}. More specifically, if $f\in L^\infty(\T)$ with Fourier series $S_N(f)(t)=\sum_{n=-N}^N \hat{f}(n)e^{int}$, then
\begin{equation}\label{e:Ces}\frac{1}{N}\sum_{n=0}^{N-1}S_n(f)=F_N\ast f\rightarrow f\end{equation}
weak* (uniformly if $f \in C(\T)$), where $F_N(t)=\frac{1}{N}\sum_{n=0}^{N-1}\sum_{k=-n}^ne^{ikt}$ is Fej\'{e}r's kernel. Shortly after he gave explicit examples of continuous periodic functions whose Fourier series do not converge pointwise \cite{F2}.  

We may interpret the Ces\`aro convergence (\ref{e:Ces}) through pointwise multiplication under the Fourier transform: the sequence $(\hat{F}_N)$ forms a bounded approximate identity for the Fourier algebra $A(\Z)$, and we have $\hat{F}_N\cdot x\rightarrow x$
weak* for any $x\in VN(\Z)\cong L^\infty(\T)$ (uniformly if $x \in C^*_\lambda(\Z)\cong C(\T)$), where $\cdot$ is the canonical pointwise action of $A(\Z)$ on $VN(\Z)$. It follows that 
$$x=w^*-\lim_N\frac{1}{N}\sum_{n=0}^{N-1}\chi_{[-n,n]}\cdot x=w^*-\lim_N\frac{1}{N}\sum_{n=0}^{N-1}\sum_{k=-n}^n\tau(x\lm(k)^*)\lm(k)$$
provides a Fej\'er representation for any $x$ in the von Neumann crossed product $VN(\Z)=\Z\bar{\rtimes}\C$ through an explicit linear combination of translation operators, where $\tau$ is the canonical tracial state on $VN(\Z)$. Note the appearance of a F{\o}lner sequence for $\Z$, linking the Ces\`aro summability to amenability of $\Z$.

Similar Fej\'{e}r type representations exist for non-trivial crossed products $G\bar{\rtimes} M$ where $G$ is a locally compact abelian group acting on a von Neumann algebra $M$ (see, for instance, \cite{L,ZM} or \cite[\S7.10]{Ped}), where the coefficients are now $M$-valued. In this setting, the Ces\`aro sum is replaced by a suitable average over a bounded approximate identity in $L^1(\widehat{G})\cong A(G)$, again linking Fej\'{e}r representations to amenability of $G$.

If $G$ is a discrete group acting on a von Neumann algebra $M$, then the ``Fourier series''
\begin{equation}\label{e:FS} \sum_{s\in G} E(xu(s)^*)u(s)\end{equation}
is in general not strongly or even weak* convergent to $x$ for every $x\in G\bar{\rtimes} M$, even in the case of $\Z$ acting trivially on $\C$, as mentioned above. Here, $E:G\bar{\rtimes} M\rightarrow M$ is the canonical conditional expectation, and $u(s)$ is the image of the regular representation in the crossed product. Summability properties of (\ref{e:FS}) and related questions concerning the Fourier analysis of $C^*$- and von Neumann crossed products have been studied in detail by many authors (see, for instance, \cite{BC1,BC2,DR,Exel,G,Mc,MTT,MSTT,Mercer,Suz,Zach1,ZM}). In particular, Fej\'{e}r type representations for elements of crossed products have been considered over (weakly) amenable discrete groups \cite{BC1,BC2,Exel,G,ZM}. In this paper, we complete one aspect of this line of work by showing that Fej\'{e}r representability in crossed products is equivalent to the approximation property (AP) of the underlying locally compact group, as introduced by Haagerup--Kraus \cite{HK}. More precisely, we establish an explicit non-commutative Fej\'{e}r representation for elements of $C^*$- and von Neumann crossed products over arbitrary locally compact groups $G$ with the AP. Conversely, if every element of every ($C^*$- or von Neumann) crossed product admits such a Fej\'{e}r representation, then $G$ necessarily has the AP. (See Theorems \ref{t:Landstad} and \ref{t:cstarLandstad}.) 

As applications of our Fej\'{e}r representation, we
\begin{enumerate}
\item generalize a result by Haagerup--Kraus \cite{HK} for discrete groups to all locally compact groups; indeed we prove that every locally compact group with the AP is exact, thus answering Problem 9.4 (1) raised in K. Li's PhD thesis \cite{Li};
\item answer a question of B\'{e}dos--Conti \cite{BC1} on the Fej\'{e}r property of $C^*$-dynamical systems over discrete groups with the AP, thus generalizing the corresponding result of \cite{BC1} for weakly amenable groups.
\item answer a question raised by Anoussis--Katavolos--Todorov \cite{AKT2}, for all locally compact groups with the AP, on the structure of $VN(G)$-bimodules in $\BLT$, generalizing the corresponding result of \cite{AKT2} for abelian, compact, and weakly amenable discrete groups.
\end{enumerate}
The main contribution of our article, i.e., the explicit representation of elements in crossed products, will, in our view, lead to a number of additional applications, examples of which are given near the end of the paper.

Structurally, we begin in section 2 with preliminaries on operator space tensor products and dynamical systems. In section 3 we develop a notion of Fubini crossed product, and we introduce an associated dynamical notion of the slice map property for actions of locally compact groups on $C^*$- and von Neumann algebras. Section 4 is devoted to the proof of our non-commutative Fej\'{e}r respresentation, and section 5 contains the aforementioned applications.

%In particular, B\'{e}dos and Conti defined a Fej\'{e}r property for discrete $C^*$-dynamical systems and asked if every $C^*$-dynamical system over a discrete group with the AP has the Fej\'{e}r property. In this paper we establish an explicit non-commutative Fej\'{e}r representation for elements of $C^*$- and von Neumann crossed products over arbitrary locally compact groups with the AP (see Theorems \ref{t:cstarLandstad} and \ref{t:Landstad}). In the discrete case, our results are related to work of Suzuki \cite{Suz} and Zacharias \cite{Zach1}.

%As an application of our Fej\'{e}r representation, we show that every locally compact group with the AP is exact. This generalizes a result by Haagerup--Kraus \cite{HK} for discrete groups. 
%Since exactness is equivalent to property A for all second countable locally compact groups, our result answers Problem 9.4 (1) raised in K. Li's PhD thesis \cite{Li}, written under the 
%supervision of U. Haagerup and R. Nest. Moreover, we answer, for all locally compact groups with the AP, a question raised by Anoussis--Katavolos--Todorov \cite{AKT2} on the structure 
%of $VN(G)$-bimodules in $\BLT$, generalizing the corresponding result of \cite{AKT2} for abelian, compact, and weakly amenable %discrete groups. 

\section{Preliminaries}

\subsection{Operator space tensor products} Throughout the paper we let $\pten$, $\ten^{\vee}$ and $\hten$ denote the operator space projective, injective, and Haagerup tensor products, respectively. Recall that for $C^*$-algebras $A$ and $B$, the injective tensor product $A\ten^{\vee}B$ coincides with the spatial/minimal tensor product $A\ten_{\min}B$ (see, e.g., \cite[Proposition 8.1.6]{ER}). The algebraic and Hilbert space tensor products will be denoted by $\ten$, the relevant product being clear from context. The weak* spatial tensor product will be denoted by $\oten$. On a Hilbert space $H$, we let $\mc{K}(H)$, $\mc{T}(H)$ and $\BH$ denote the spaces of compact, trace class, and bounded operators, respectively.

For dual operator spaces $X^*\subseteq\BH$ and $Y^*\subseteq\mc{B}(K)$, the \textit{weak*-Haagerup tensor product} $X^*\whten Y^*$ is the space of $u\in\BH\oten\mc{B}(K)$ for which there exist an index set $I$ and $(x_i)_{i\in I}\subseteq X$ and $(y_i)_{i\in I}\subseteq Y$ satisfying 
$\norm{\sum_{i}x_ix_i^*},\norm{\sum_iy_i^*y_i}<\infty$ and $u=\sum_i x_i\ten y_i$, where each sum is understood in the respective weak* topologies. Then
$$\norm{u}_{w^*h}:=\inf\{\norm{\sum_{i}x_ix_i^*},\norm{\sum_iy_i^*y_i}\mid u=\sum_i x_i\ten y_i\}$$
and the infimum is actually attained \cite[Theorem 3.1]{BS}. There are corresponding matricial norms on $M_n(X^*\whten Y^*)$ giving an operator space structure on $X^*\whten Y^*$ which is independent of the weak* homeomorphic inclusions $X^*\subseteq\BH$ and $Y^*\subseteq\mc{B}(K)$. Moreover, $(X\hten Y)^*\cong X^*\whten Y^*$ completely isometrically \cite[Theorem 3.2]{BS}. In particular, by \cite[Proposition 9.3.4]{ER}, for any Hilbert space $H$
$$\BH\cong(\mc{T}(H))^*\cong((H_c)^*\hten H_c)^*\cong H_c\whten (H_c)^*$$
completely isometrically, where $H_c$ refers to the column operator space structure on $H$. Note that the $w^*$-Haagerup tensor product of dual operator spaces coincides with the extended Haagerup tensor product (see \cite{ER1}).

Given dual operator spaces $X^*\subseteq\BH$ and $Y^*\subseteq\mc{B}(K)$, the \textit{normal Fubini tensor product} $X^*\oten_{\mc{F}}Y^*$ is given by
$$X^*\oten_{\mc{F}}Y^*=\{T\in \mc{B}(H\ten K)\mid (\om\ten\id)(T)\in Y^*, \ (\id\ten\rho)\in X^*, \ \om\in\Th, \  \rho\in\mc{T}(K)\}.$$
Clearly, $X^*\oten Y^*\subseteq X^*\oten_{\mc{F}}Y^*$. A dual operator space $X^*$ is said to have the dual slice map property if $X^*\oten Y^*=X^*\oten_{\mc{F}}Y^*$ for all operator spaces $Y$. It is known that $X^*$ has the dual slice map property if and only if $X$ has the operator space approximation property (see \cite[Theorem 11.2.5]{ER}). 

\subsection{Dynamical systems} Let $G$ be a locally compact group. The adjoint of convolution $\ast:\LO\pten\LO\rightarrow\LO$ is a co-associative co-multiplication $\Gamma:\LI\rightarrow\LI\oten\LI$ satisfying $\Gamma(f)(s,t)=f(st)$, for all $f\in\LI$. There are left and right fundamental unitaries $W,V\in\mc{B}(L^2(G\times G))$ which implement $\Gamma$ in the sense that
$$\Gamma(f)=W^*(1\ten M_f)W=V(M_f\ten 1)V^*, \ \ \ f\in\LI.$$
They are given respectively by
$$W\xi(s,t)=\xi(s,s^{-1}t), \ \ \  V\xi(s,t)=\xi(st,t)\Delta(t)^{1/2}, \ \ \ s,t\in G, \ \xi\in L^2(G\times G),$$
where $\Delta$ is the modular function of $G$. These fundamental unitaries are intimately related to the left and right regular representations $\lm,\rho:G\rightarrow\BLT$ given by
$$\lm(s)\xi(t)=\xi(s^{-1}t), \ \ \rho(s)\xi(t)=\xi(ts)\Delta(s)^{1/2}, \ \ \ s,t\in G, \ \xi\in\LT.$$
The von Neumann algebra generated by $\lm(G)$ is called the \textit{group von Neumann algebra} of $G$ and is denoted by $VN(G)$. It follows that $W\in\LI\oten VN(G)$ and $V\in VN(G)'\oten\LI$.

The set of coefficient functions of the left regular representation,
\begin{equation*}A(G)=\{\psi:G\rightarrow\mathbb{C} : \psi (s)=\la\lm(s)\xi,\eta\ra, \ \xi,\eta\in\LT, \ s\in G\},\end{equation*}
is called the \textit{Fourier algebra} of $G$. It was shown by Eymard that, endowed with the norm
$$\norm{\psi}_{A(G)}=\text{inf}\{\norm{\xi}_{\LT}\norm{\eta}_{\LT} : \psi(\cdot)=\la\lm(\cdot)\xi,\eta\ra\},$$
$A(G)$ is a Banach algebra under pointwise multiplication \cite[Proposition 3.4]{E}. Furthermore, it is the predual of $VN(G)$, where the duality is given by
\begin{equation*}\la \psi,\lm(s)\ra=u(s),\ \ \ \psi\in A(G), \ s\in G.\end{equation*}
The inclusion $A(G)\subseteq VN(G)^*$ induces a canonical operator space structure on $A(G)$.

%Eymard also showed that the space of functions $\vphi:G\rightarrow\mathbb{C}$ for which there exists a strongly continuous unitary representation $\pi:G\rightarrow\mc{B}(H_\pi)$ and $\xi,\eta\in H_\pi$ such that $\vphi(s)=\la\pi(s)\xi,\eta\ra$, $s\in G$, is a unital Banach algebra (with pointwise multiplication) under the norm
%$$\norm{\vphi}_{B(G)}=\text{inf}\{\norm{\xi}_{H_\pi}\norm{\eta}_{H_\pi} : \vphi(\cdot)=\la\pi(\cdot)\xi,\eta\ra\},$$
%called the  \textit{Fourier-Stieltjes algebra} of $G$ \cite[Proposition 2.16]{E}, denoted by $B(G)$. It is known that $B(G)$ is isometrically isomorphic to the dual of the full group $C^*$-algebra $C^*(G)$. Under this identification, states on $C^*(G)$ correspond to positive definite functions of norm one on $G$. 

The adjoint of pointwise multiplication $\cdot:A(G)\pten A(G)\rightarrow A(G)$ defines a co-associative co-multiplication $\wh{\Gamma}:VN(G)\rightarrow VN(G)\oten VN(G)$ satisfying $\wh{\Gamma}(\lm(s))=\lm(s)\ten\lm(s)$, $s\in G$. There are left and right fundamental unitaries $\h{W},\h{V}\in\mc{B}(L^2(G\times G))$ which implement the co-multiplication via
$$\h{\Gamma}(x)=\h{W}^*(1\ten x)\h{W}=\h{V}(x\ten 1)\h{V}, \ \ \ x\in VN(G).$$
They are given specifically by 
$$\h{W}\xi(s,t)=\xi(ts,t), \ \ \ \h{V}\xi(s,t)=W\xi(s,t)=\xi(s,s^{-1}t), \ \ \ s,t\in G, \ \xi\in L^2(G\times G).$$

A function $\vphi\in\LI$ is a \textit{completely bounded multiplier} of $A(G)$ if the map
$$m_{\vphi}:A(G)\ni v\mapsto \vphi\cdot v\in A(G)$$
is completely bounded. Under the natural norm, the space $\Mcb$ of completely bounded multipliers becomes a completely contractive Banach algebra. It is known that $\Mcb$ is a dual operator space with predual $\Qcb$ \cite[Proposition 1.10]{deCH} (see also \cite{KR}). A locally compact group $G$ has the \textit{approximation property (AP)} if there exists a net $(v_i)$ in $A(G)$ such that $v_i\rightarrow 1$ $\sigma(\Mcb,\Qcb)$. By \cite[Remark 1.2]{HK}, the net $(v_i)$ may always be chosen in $A(G)\cap C_c(G)$.

Given $\vphi\in\Mcb$, the adjoint of $m_{\vphi}$ defines a normal completely bounded map on $VN(G)$. This map extends in a canonical fashion to a normal completely bounded $\LI$-bimodule map $\h{\Theta}(\vphi)$ on $\BLT$. For example, when $\vphi\in A(G)$ we have
\begin{equation}\label{e:Thetahat}\h{\Theta}(\vphi)(T)=(\vphi\ten\id)\h{W}^*(1\ten T)\h{W}, \ \ \ T\in\BLT.\end{equation}
In \cite{NRS} it was shown that the assignment $\vphi\mapsto\h{\Theta}(\vphi)$ is a completely isometric algebra isomorphism
$$\h{\Theta}:\Mcb\cong\NCBLTD$$
from $\Mcb$ onto the algebra $\NCBLTD$ of normal completely bounded $\LI$-bimodule maps on $\BLT$ which leave $VN(G)$ globally invariant.

A $W^*$-dynamical system $(M,G,\alpha)$ consists of a von Neumann algebra $M\subseteq\BH$ endowed with a homomorphism $\alpha:G\rightarrow\mathrm{Aut}(M)$ such that for each $x\in M$, the map $G\ni s\rightarrow \alpha_s(x)\in M$ is weak* continuous. Every action induces a co-action of $\LI$ on $M$, that is, a normal unital injective $*$-homomorphism $\alpha:M\rightarrow \LI\oten M$ and a corresponding right $\LO$-module structure on $M$ \cite[\S18.6]{Str}. At this level, the co-action is co-associative in the sense that $(\Gamma\ten\id)\circ \alpha = (\id\ten\alpha)\circ \alpha$,
and the module structure is determined by
$$x\star f=(f\ten\id)\alpha(x), \ \ \ f\in\LO, \ x\in M.$$
Note that the predual $M_*$ becomes a left operator $\LO$-module via $\alpha_*:\LO\pten M_*\rightarrow M_*$.

The crossed product of $M$ by $G$, denoted $G\bar{\rtimes}_{\alpha} M$, is the von Neumann subalgebra of $\BLT\oten M$ generated by $\alpha(M)$ and $VN(G)\ten 1$. When $\alpha$ is clear from context, we often simply write $G\bar{\rtimes}M$. For $\mu\in M(G)$, we write $u(\mu)=\lm(\mu)\ten 1$ for the canonical image of $\lm(\mu)$ in the crossed product. Recall that $\lm$ is involutive, so that, in particular, $u(f)^*=u(f^o)$, where
$$f^o(s)=\Delta(s)^{-1}\overline{f(s^{-1})}, \ \ \ f\in\LO.$$

Any action $\alpha$ admits a dual co-action $\h{\alpha}:G\bar{\rtimes}M\rightarrow VN(G)\oten(G\bar{\rtimes}M)$
of $VN(G)$ on the crossed product, given by
\begin{equation}\label{e:coaction}\h{\alpha}(T)=(\h{W}^*\ten 1)(1\ten T)(\h{W}\ten 1), \ \ \ T\in G\bar{\rtimes}M.\end{equation}
On the generators we have $\h{\alpha}(\hat{x}\ten 1)=\h{\Gam}(\hat{x})\ten 1$, $\hat{x}\in VN(G)$ and $\h{\alpha}(\alpha(x))=1\ten\alpha(x)$, $x\in M$. Moreover, 
$$(G\bar{\rtimes}M)^{\h{\alpha}}=\{T\in G\bar{\rtimes}M \mid \h{\alpha}(T)=1\ten T\}=\alpha(M).$$
This co-action yields a canonical right operator $A(G)$-module structure on the crossed product $G\bar{\rtimes}M$ via 
$$T\cdot\psi=(\psi\ten\id)\h{\alpha}(T), \ \ \ \psi\in A(G), \ T\in G\bar{\rtimes}M.$$
It follows from equations (\ref{e:Thetahat}) and (\ref{e:coaction}) that
$$T\cdot\psi=(\h{\Theta}(\psi)\ten\id)(T), \ \ \ \psi\in A(G), \ T\in G\bar{\rtimes}M.$$

%Since $M$ is standardly represented on $H$, there exists a unitary representaton $u:G\rightarrow \BH$, viewed as an element $U\in\LI\oten\BH$ such that $\alpha(x)=U^*(1\ten x)U$, $x\in M$ \cite[Corollary 3.11]{H}. At the level of vectors $\xi\in C_c(G,H)\subseteq \LT\ten H$ we have
%\begin{equation}\label{e:1}\alpha(x)\xi(s)=U^*(1\ten x)U\xi(s)=u_s^*xu_s\xi(s), \ \ \ s\in G, \ x\in M.\end{equation}

Let $N=G\bar{\rtimes}M$, and let $\vphi:VN(G)^+\rightarrow[0,\infty]$ denote the Plancherel weight on $VN(G)$.
Then $$E=(\vphi\ten\id)\circ\h{\alpha}:N^+\rightarrow \widehat{N}^+$$
is an operator-valued weight in the sense of \cite{Haa2}, where $\widehat{N}^+$ is the extended positive part of $N$ (i.e., the set of homogeneous, additive, lower semi-continuous functions on $N_*^+$). Following \cite{L}, let 
$$\mc{M}_{(\vphi\ten\id)}^+:=\{T\in(VN(G)\oten N)^+\mid \exists \ C_T>0 \  \vphi((\id\ten\om)(T))\leq C_T\norm{\om}, \ \om\in(N_*)_+\}.$$
Also, let $\mc{N}_{(\vphi\ten\id)}:=\{T\in VN(G)\oten N\mid T^*T\in\mc{M}_{(\vphi\ten\id)}^+\}$, 
and $\mc{M}_{(\vphi\ten\id)}:=\mathrm{span} \ \mc{M}_{(\vphi\ten\id)}^+=\mc{N}_{(\vphi\ten\id)}^*\mc{N}_{(\vphi\ten\id)}$. The operator-valued weight $E$ defines a linear map from the weak*-dense subspace $N_1:=\h{\alpha}^{-1}(\mc{M}_{(\vphi\ten\id)})$ of $N$ to $N$ satisfying (see \cite[\S2, Corollary 1.3]{L} or \cite{Haa,Haa2} for details):

\begin{enumerate}
\item $E(T^*)=E(T)^*$, $T\in N_1$,
\item $E(T^*T)\geq 0$, $T\in N_1$,
\item $E(xTy)=xE(T)y$, $T\in N_1$, $x,y\in M$,
\item For $A,B\in N_0:=\h{\alpha}^{-1}(\mc{N}_{(\vphi\ten\id)})$, the map $T\mapsto E(A^*TB)$ is $\sigma$-continuous on bounded sets.
\end{enumerate}

By the proof of \cite[Lemma 2.6]{L}, $\{u(f)Tu(g)\mid T\in N, \ f,g\in C_c(G)\}\subseteq N_1$ and is weak* dense in $N$. Note that
\begin{align*}\h{\alpha}(E(u(f)Tu(g)))&=\h{\alpha}((\vphi\ten\id)\h{\alpha}(u(f)Tu(g)))\\
&=(\vphi\ten\id\ten\id)((\id\ten\h{\alpha})(\h{\alpha}(u(f)Tu(g))))\\
&=(\vphi\ten\id\ten\id)((\h{\Gamma}\ten\id)(\h{\alpha}(u(f)Tu(g))))\\
&=1\ten(\vphi\ten\id)(\h{\alpha}(u(f)Tu(g))),
\end{align*}
where the last line follows from strong right invariance of the Plancherel weight (see e.g., \cite[Proposition 3.1]{KV2}). It follows that $E$ maps $N_1$ into $\alpha(M)\cong M$, and we may view $E$ as an operator-valued weight from $N$ to $M$. When $G$ is discrete, the Plancherel weight $\vphi=\la(\cdot)\delta_e,\delta_e\ra$ and the operator-valued weight $E$ becomes the canonical faithful normal conditional expectation $E:M\bar{\rtimes} G\rightarrow M$. 

A $C^*$-dynamical system $(A,G,\alpha)$ consists of a $C^*$-algebra $A$ endowed with a homomorphism $\alpha:G\rightarrow\mathrm{Aut}(A)$ such that for each $a\in A$, the map $G\ni s\mapsto\alpha_s(a)\in A$ is norm continuous. A covariant representation $(\pi, \sigma)$ of $(A,G,\alpha)$ consists of a representation $\pi:A\rightarrow\BH$ and a unitary representation $\sigma:G\rightarrow\BH$ such that $\pi(\alpha_s(a))=\sigma(s)\pi(a)\sigma(s)^{-1}$ for all $s\in G$. Given a covariant representation $(\pi,\sigma)$, we let
$$(\pi \times \sigma)(f) = \int_G \pi(f(t)) \sigma(t) \ dt, \ \ \ f\in C_c(G,A).$$
The \emph{full crossed product} $G\rtimes_f A$ is the completion of $C_c(G,A)$ in the norm
$$\|f \| = \sup_{(\pi, \sigma)} \| (\pi \times \sigma)(f)\|$$
where the supremum is taken over  all covariant representations $(\pi, \sigma)$ of $(A,G,\alpha)$.

Let $\pi_u:A\hookrightarrow\mc{B}(H_u)$ be the universal representation of $A$. Then $(\tilde{\pi}_u, u)$ is a covariant representation on $L^2(G,H_u)$, where 
$$\tilde{\pi}_u(a)\xi(t)=\pi_u(\alpha_{t^{-1}}(a))\xi(t), \ \ \  u(s)\xi(t)=\xi(s^{-1}t), \ \ \ \xi\in L^2(G,H_u).$$
The \textit{reduced crossed product} $G\rtimes A$ is defined to be the norm closure of $\tilde{\pi}_u\times u(C_c(G,A))$.

Analogously to the von Neumann setting, one can view the action through the injective $*$-homomorphism  
$$\alpha:A\ni a\mapsto (s\mapsto\alpha_{s^{-1}}(a))\in C_b(G,A)\subseteq M(C_0(G)\iten A),$$
which, under the canonical inclusion $M(C_0(G)\iten A)\subseteq \LI\oten A^{**}$ is nothing but the representation $\tilde{\pi}_u$ above. 
Moreover, $\la\alpha(A)(C_0(G)\ten 1)\ra=C_0(G)\iten A$ 
%(reference) 
and it follows that
$$G\rtimes A=\la\alpha(A)(C_{\lm}^*(G)\ten 1)\ra\subseteq M(\mc{K}(\LT)\iten A).$$
Letting $B=G\rtimes A$ we define, as above, 
$$\mc{A}_{(\vphi\ten\id)}^+:=\{T\in M(C^*_{\lm}(G)\iten B)^+\mid \exists \ C_T>0 \  \vphi((\id\ten\om)(T))\leq C_T\norm{\om}, \ \om\in(B^*)_+\}.$$
Also, let 
$$\mc{B}_{(\vphi\ten\id)}:=\{T\in M(C^*_{\lm}(G)\iten B)\mid T^*T\in\mc{M}_{(\vphi\ten\id)}^+\},$$
and $\mc{A}_{(\vphi\ten\id)}:=\mathrm{span} \ \mc{A}_{(\vphi\ten\id)}^+=\mc{B}_{(\vphi\ten\id)}^*\mc{B}_{(\vphi\ten\id)}$. The dual co-action in this setting is
$$\h{\alpha}:B\ni T \mapsto (\h{W}^*\ten 1)(1\ten T)(\h{W}\ten 1)\in M(C^*_\lm(G)\iten B),$$
which extends by the above formula to a normal *-homomorphism $\h{\alpha}:B''\rightarrow VN(G)\oten B''$. The operator-valued weight $E=(\vphi\ten\id)\circ\widehat{\alpha}:B''\rightarrow \widehat{B''}^+$ restricts to a linear map from $B_1:=\h{\alpha}^{-1}(\mc{A}_{(\vphi\ten\id)})$ to $B$ satisfying $E(u(f)Tu(g))\in\alpha(A)$ for every $T\in B$ and $f,g\in C_c(G)$ (see \cite[\S3]{L}).

\section{Fubini crossed products}

\subsection{$W^*$-setting}

Let $(M,G,\alpha)$ be a $W^*$-dynamical system. Viewing the crossed product $G\bar{\rtimes}M$ as a `twisted' tensor product, it is natural to study a `twisted' version of the normal Fubini tensor product.

\begin{defn}\label{d:Fubini} Let $(M,G,\alpha)$ be a $W^*$-dynamical system. For a $G$-invariant weak* closed subspace $X\subseteq M$ we define the \textit{Fubini crossed product} of $X$ by $G$ as
$$G\bar{\rtimes}_{\mc{F}}X:=\{T\in G\bar{\rtimes}M\mid E(u(f)Tu(g))\in \alpha(X), \ f,g\in C_c(G)\}.$$
\end{defn}

Based on ideas in \cite{KW}, a notion of Fubini crossed product for countable discrete groups acting on operator spaces was introduced in \cite[Definition 4.1]{UZ}. The natural analogue of \cite[Definition 4.1]{UZ} for locally compact groups acting on dual operator spaces is equivalent to our Definition \ref{d:Fubini}, as we now show.

\begin{prop}\label{p:UV} Let $(M,G,\alpha)$ be a $W^*$-dynamical system. For any $G$-invariant weak* closed subspace $X\subseteq M$,
$$G\bar{\rtimes}_{\mc{F}}X=\{T\in G\bar{\rtimes}M\mid (\rho\ten\id)(T)\in X, \ \rho\in\TC\}.$$
\end{prop}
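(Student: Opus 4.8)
The plan is to reduce the proposition to a single identity expressing the operator-valued weight $E$, applied to elements sandwiched by $C_c(G)$-translations, as an honest trace-class slice of $T$. Concretely, writing $N=G\bar{\rtimes}M$ and identifying $\al(M)\cong M$, I claim that for all $T\in N$ and $f,g\in C_c(G)$,
\begin{equation*}
E(u(f)Tu(g))=\al\big((\om_{g,f^o}\ten\id)(T)\big),
\end{equation*}
where $\om_{\xi,\eta}\in\TC$ denotes the vector functional $x\mapsto\la x\xi,\eta\ra$ on $\BLT$ and $f^o$ is as above. Granting this, the proposition is immediate, since the membership condition $E(u(f)Tu(g))\in\al(X)$ is then equivalent to $(\om_{g,f^o}\ten\id)(T)\in X$.

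To prove the identity I would pair against an arbitrary $\theta\in N_*$ and use $E=(\vphi\ten\id)\circ\h{\al}$ in the form $\theta(E(S))=\vphi\big((\id\ten\theta)\h{\al}(S)\big)$ for $S=u(f)Tu(g)$. Since $u(f)=u(f^o)^*$, I write $S=u(f^o)^*Tu(g)$ and expand $\h{\al}(S)=\h{\al}(u(f^o))^*\,\h{\al}(T)\,\h{\al}(u(g))$ using $\h{\al}(u(h))=\h{\Gamma}(\lm(h))\ten 1$ together with the explicit formula for $\h{W}$. The key point is that, although $\vphi$ is an unbounded weight and a naive leg-wise evaluation diverges, the sandwich by $\lm(f^o)$ and $\lm(g)$ places the resulting first-leg operator inside $\mc{M}_\vphi$, so that the Plancherel GNS relation $\vphi(\lm(a)^*Y\lm(b))=\la Yb,a\ra_{L^2(G)}$ (coming from $\Lphi(\lm(h))=h$ and the left $VN(G)$-module property of $\Lphi$) applies and collapses the first leg to the vector functional $\om_{g,f^o}$. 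To control the coupling introduced by $\h{W}$, I would first verify the identity on the weak*-total set of generators $u(h)\al(x)$ of $N$, where the covariance relation $u(s)\al(x)u(s)^*=\al(\al_s(x))$ renders the computation purely algebraic, and then extend to all $T\in N$: both sides are weak*-continuous in $T$ on bounded sets, the left side by property (4) of $E$ (with $A=u(f^o)$ and $B=u(g)$, which lie in $N_0$) and the right side by normality of the slice map. The verification of this identity, and in particular the bookkeeping of the $\h{W}$-coupling against the Plancherel weight, is the main obstacle.

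With the identity in hand, the two inclusions follow from soft functional-analytic facts. As $f,g$ range over $C_c(G)$, the vectors $g$ and $f^o$ range over the dense subspace $C_c(G)$ of $L^2(G)$ (the map $f\mapsto f^o$ being a bijection of $C_c(G)$), so the rank-one functionals $\{\om_{g,f^o}\}$ have norm-dense linear span in $\TC$. Moreover, for fixed $T$ the slice map $\TC\ni\rho\mapsto(\rho\ten\id)(T)\in M$ is norm-continuous, since $\norm{(\rho\ten\id)(T)}\le\norm{\rho}_1\norm{T}$, and $X$, being weak* closed, is norm closed. Hence if $E(u(f)Tu(g))\in\al(X)$ for all $f,g$, then $(\rho\ten\id)(T)\in X$ for every $\rho$ in a norm-dense subspace, and by continuity and closedness for every $\rho\in\TC$; conversely, if $(\rho\ten\id)(T)\in X$ for all $\rho$, then in particular $(\om_{g,f^o}\ten\id)(T)=\al^{-1}\big(E(u(f)Tu(g))\big)\in X$, giving $E(u(f)Tu(g))\in\al(X)$. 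This establishes both inclusions and hence the equality.
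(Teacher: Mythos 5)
Your proposal is correct and follows essentially the same route as the paper: the central identity $E(u(f)Tu(g))=\alpha\big((\om_{g,f^o}\ten\id)(T)\big)$ is exactly the one the paper establishes (there written as $E(u(f)^*Tu(g))=\alpha((\om_{g,f}\ten\id)(T))$, equivalent via the bijection $f\mapsto f^o$ of $C_c(G)$), verified on the generators $\alpha(x)u(r)$ by the covariance relation and extended by weak* continuity on bounded sets, after which density of $\mathrm{span}\{\om_{g,f^o}\}$ in $\TC$ finishes the argument. The only part you leave as a sketch is the integral computation on generators, which is precisely the explicit calculation the paper carries out, so there is no divergence in method.
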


\begin{proof} First consider $T\in G\bar{\rtimes}M$ of the form $\alpha(x)u(r)$, $x\in M$, $r\in G$. Then for $f,g\in C_c(G)$ we have, on the one hand,
\begin{align*} &E(u(f)^*Tu(g))=(\vphi\ten\id)\h{\alpha}(u(f^o)\alpha(x)u(r)u(g))\\
&=(\vphi\ten\id\ten\id)\bigg(\iint f^o(s)g(t)(\lm(s)\ten\lm(s)\ten 1)(1\ten \alpha(x))(\lm(rt)\ten\lm(rt)\ten 1) \ ds \ dt\bigg)\\
&=(\vphi\ten\id\ten\id)\bigg(\iint f^o(s)g(t)(1\ten u(s)\alpha(x)u(rt))(\lm(srt)\ten1\ten 1) \ ds \ dt\bigg)\\
&=\int f^o(s)g(r^{-1}s^{-1}) u(s)\alpha(x)u(s^{-1}) \ ds\\
&=\int \overline{f(s^{-1})}g(r^{-1}s^{-1})\Delta(s^{-1}) (1\ten\lm(s))\alpha(x)(1\ten \lm(s^{-1}) \ ds.\\
\end{align*}
Performing the substitution $s\mapsto s^{-1}$ yields
\begin{align*} E(u(f)^*Tu(g))&=\int \overline{f(s)}g(r^{-1}s)(\lm(s^{-1})\ten1)\alpha(x)(\lm(s)\ten 1) \ ds\\
&=(\overline{f}\cdot\lm(r)g\ten\id\ten\id)(\Gam\ten\id)(\alpha(x))\\
&=(\overline{f}\cdot\lm(r)g\ten\id\ten\id)(\id\ten\alpha)(\alpha(x))\\
&=\alpha(x\star(\overline{f}\cdot\lm(r)g)).
\end{align*}
On the other hand, $\lm(r)\cdot\om_{g,f}|_{\LI}=\overline{f}\cdot\lm(r)g\in\LO$, so that
$$(\om_{g,f}\ten\id)(\alpha(x)u(r))=(\lm(r)\cdot\om_{g,f}|_{\LI}\ten\id)(\alpha(x))=x\star(\overline{f}\cdot\lm(r)g).$$
Thus, 
$$\alpha((\om_{g,f}\ten\id)(T))=E(u(f)^*Tu(g)).$$
By normality the above equality is valid for every $T\in G\bar{\rtimes}M$. Since $\mathrm{span}\{\om_{g,f}\mid f,g\in C_c(G)\}$ is dense in $\TC$, the claim is established.
\end{proof}

The following example further justifies the terminology of Fubini crossed products.

\begin{example}\label{ex:1} Let $G$ be a locally compact group acting trivially on a von Neumann algebra $M$. Then $G\bar{\rtimes}M=VN(G)\oten M$. By Proposition \ref{p:UV}, for any $G$-invariant weak* closed subspace $X\subseteq M$ we have
$$G\bar{\rtimes}_{\mc{F}}X=\{T\in VN(G)\oten M\mid (\rho\ten\id)(T)\in X, \ \rho\in\TC\}=VN(G)\oten_{\mc{F}} X.$$
\end{example}

\begin{prop} Let $(M,G,\alpha)$ be a $W^*$-dynamical system. For any $G$-invariant weak* closed subspace $X\subseteq M$ we have
$$G\bar{\rtimes}X\subseteq G\bar{\rtimes}_{\mc{F}}X,$$
where $G\bar{\rtimes}X=\overline{\la\alpha(X)(VN(G)\ten 1)\ra}^{w^*}$.
\end{prop}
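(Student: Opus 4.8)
The plan is to use the slice-map description of the Fubini crossed product from \propref{p:UV}, namely $G\bar{\rtimes}_{\mc{F}}X=\{T\in G\bar{\rtimes}M\mid(\rho\ten\id)(T)\in X,\ \rho\in\TC\}$, and to verify this membership condition on the generators of $G\bar{\rtimes}X$. First I would record that the set
$$\mc{G}:=\{T\in G\bar{\rtimes}M\mid(\rho\ten\id)(T)\in X \text{ for all } \rho\in\TC\}$$
is a weak* closed linear subspace of $G\bar{\rtimes}M$: each slice map $(\rho\ten\id):\BLT\oten M\rightarrow M$ is weak* continuous since $\rho\in\TC$ is normal, and $X$ is weak* closed, so $\mc{G}$ is an intersection of weak* closed subspaces. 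Because $G\bar{\rtimes}X$ is by definition the weak* closed linear span of the products $\alpha(x)(\hat{y}\ten 1)$ with $x\in X$ and $\hat{y}\in VN(G)$, it then suffices to show that every such product lies in $\mc{G}$.

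Fix $x\in X$, $\hat{y}\in VN(G)$ and $\rho\in\TC$. The next step is to absorb $\hat{y}$ into the slicing functional. Writing $\sigma\in\TC$ for the normal functional $\sigma(a)=\rho(a\hat{y})$ (which lies in the predual since $\TC$ is a $\BLT$-bimodule), a direct computation on elementary tensors gives $(\rho\ten\id)(\alpha(x)(\hat{y}\ten 1))=(\sigma\ten\id)(\alpha(x))$. Since $\alpha(x)\in\LI\oten M$, the value of $(\sigma\ten\id)(\alpha(x))$ depends only on the restriction $f:=\sigma|_{\LI}$, which is a normal functional on $\LI$, i.e. $f\in\LO$; hence $(\sigma\ten\id)(\alpha(x))=(f\ten\id)\alpha(x)=x\star f$. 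Thus the whole question reduces to showing $x\star f\in X$.

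This last point is where the hypothesis that $X$ is $G$-invariant is used, and it is really the conceptual content of the proposition: $G$-invariance of $X$ is exactly what makes $X$ a weak* closed $\LO$-submodule for the $\star$-action. Concretely, since $X$ is weak* closed we have $X={}^\perp(X_\perp)$, and for any $\om\in X_\perp\subseteq M_*$, identifying $\alpha(x)\in\LI\oten M$ with the field $s\mapsto\alpha_{s^{-1}}(x)$,
$$\la x\star f,\om\ra=\la\alpha(x),f\ten\om\ra=\int_G f(s)\la\alpha_{s^{-1}}(x),\om\ra\,ds=0,$$
because $\alpha_{s^{-1}}(x)\in X$ for every $s$ by $G$-invariance while $\om$ annihilates $X$. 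Hence $x\star f\in X$, so every generator lies in $\mc{G}$, and therefore $G\bar{\rtimes}X\subseteq G\bar{\rtimes}_{\mc{F}}X$.

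I expect the first two steps (the reduction to generators and the absorption of $\hat{y}$ into the functional) to be entirely formal; the only place requiring genuine care is the final identification of $G$-invariance with invariance under the module action $\star$. The delicate issue there is justifying the integral formula $x\star f=\int_G f(s)\alpha_{s^{-1}}(x)\,ds$ together with the interchange of integral and $\om$, which rests on the identification of $\alpha(x)$ with the weak*-measurable field $s\mapsto\alpha_{s^{-1}}(x)$. Testing against one normal functional $\om\in X_\perp$ at a time, as above, sidesteps any convergence subtleties, so I anticipate no serious obstacle.
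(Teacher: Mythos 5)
Your proof is correct and follows essentially the same route as the paper: both reduce to generators of $G\bar{\rtimes}X$, use the slice computation underlying \propref{p:UV} to identify the relevant slice of a generator with $x\star f$ for some $f\in\LO$, and conclude by invariance of $X$. The only differences are cosmetic --- you verify the slice-map characterization from \propref{p:UV} rather than Definition~\ref{d:Fubini} directly, absorb a general $\hat{y}\in VN(G)$ into the slicing functional instead of taking $\hat y=\lm(r)$ and extending by normality, and you supply the justification that $G$-invariance plus weak* closedness yields $\LO$-invariance under $\star$, a point the paper asserts without proof.
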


\begin{proof} First consider $T\in G\bar{\rtimes}X$ of the form $\alpha(x)u(r)$, $x\in X$, $r\in G$. Then, as shown in the proof of Proposition \ref{p:UV}, for $f,g\in C_c(G)$ we have
$$E(u(f)^*Tu(g))=\alpha(x\star(\overline{f}\cdot\lm(r)g)),$$
which belongs to $\alpha(X)$ by $\LO$-invariance of $X$. The normality of $E(u(f)^*(\cdot)u(g))$ ensures the same is true for arbitrary $T\in G\bar{\rtimes}X$. 
\end{proof}

Although Proposition \ref{p:UV} shows that Fubini crossed products are determined by the restriction of slice maps to the crossed product, our (equivalent) Definition \ref{d:Fubini} was motivated by considering the twisted slice map conditions
$$E(u(f)Tu(g))=(\vphi\ten\id\ten\id)(\h{\Gam}\ten\id)(u(f)Tu(g))\in\alpha(X), \ \ \ f,g\in C_c(G),$$ 
with the perspective that the Fubini crossed product is a twisted version of the Fubini tensor product. For discrete actions, the twisted slice map conditions are directly related to the $A(G)$-action on the crossed product. 

\begin{example}\label{ex:2} For any discrete $W^*$-dynamical system $(M,G,\alpha)$ and any $G$-invariant weak* closed subspace $X\subseteq M$, we have
\begin{equation}\label{e:disA(G)}G\bar{\rtimes}_{\mc{F}}X=\{T\in G\bar{\rtimes}M\mid T\cdot\psi\in G\bar{\rtimes}X, \ \psi\in A(G)\}.\end{equation}
First, observe that
\begin{align*}\la\delta_s,\lm(f)\ra&=f(s)=(\delta_{s^{-1}}\ast f)(e)=\la\vphi,\lm(\delta_{s^{-1}}\ast f)\ra\\
&=\la\vphi,\lm(s^{-1})\lm(f)\ra=\la\vphi,\lm(f)\lm(s^{-1})\ra\\
&=\la \lm(s^{-1})\cdot\vphi,\lm(f)\ra
\end{align*}
for all $f\in\LO$. Thus, $\delta_s=\lm(s^{-1})\cdot\vphi$ as elements of $A(G)$, and we have
\begin{align*}T\cdot\delta_s &=(\lm(s^{-1})\cdot\vphi\ten\id\ten\id)\h{\alpha}(T)\\
&=(\vphi\ten\id\ten\id)(\h{\alpha}(T)(\lm(s^{-1})\ten1\ten1))\\
&=(\vphi\ten\id\ten\id)(\h{\alpha}(T)(\lm(s^{-1})\ten\lm(s^{-1})\ten1)(1\ten\lm(s)\ten 1))\\
&=(\vphi\ten\id\ten\id)(\h{\alpha}(Tu(s^{-1}))(1\ten\lm(s)\ten 1))\\
&=(\vphi\ten\id\ten\id)(\h{\alpha}(Tu(s^{-1})))u(s)\\
&=E(Tu(s^{-1}))u(s), \ \ \ s\in G.\\
\end{align*}
Suppose that $T\cdot\psi\in G\bar{\rtimes}X$ for all $\psi\in A(G)$, so that $E(Tu(s^{-1}))u(s)\in G\bar{\rtimes}X$ for all $s\in G$. Since $E(G\bar{\rtimes}X)\subseteq\alpha(X)$ we see that
\begin{align*}E(Tu(s^{-1}))&=E(E(Tu(s^{-1})))=E(E(Tu(s^{-1}))u(s)u(s^{-1}))\\
&=E((T\cdot\delta_s)u(s^{-1}))\in E(G\bar{\rtimes}X)\subseteq \alpha(X), \ \ \ s\in G.
\end{align*}
By $G$-equivariance of $E$ it follows that
$$\alpha_t(E(Tu(s)))=E(u(t)Tu(st^{-1}))\in\alpha(X)$$
for all $s,t\in G$, which implies $E(u(f)Tu(g))\in\alpha(X)$ for all $f,g\in C_c(G)$, i.e., $T\in G\bar{\rtimes}_{\mc{F}}X$.

Conversely, if $T\in G\bar{\rtimes}_{\mc{F}}X$, then by discreteness $E(Tu(s^{-1}))\in\alpha(X)$ so that $E(Tu(s^{-1}))u(s)=T\cdot \delta_s\in G\bar{\rtimes} X$
for all $s\in G$.  Norm density of $C_c(G)\cap A(G)$ in $A(G)$ then implies $T\cdot\psi\in G\bar{\rtimes} X$ for all $\psi\in A(G)$.

Note that in proving (\ref{e:disA(G)}) we have also shown
$$G\bar{\rtimes}_{\mc{F}}X = \{T\in G\bar{\rtimes}M\mid E(Tu(s^{-1}))\in\alpha(X), \ s\in G\}.$$

\end{example}

The von Neumann algebra analogue of \cite[Proposition 3.4]{Suz}, which holds by \cite[Remark 3.6]{Suz}, shows that for a discrete group $G$ with the AP acting on a von Neumann algebra $M$, for any $G$-invariant weak* closed subspace $X$, an element $T\in G\bar{\rtimes} M$ satisfying $E(Tu(s^{-1}))\in\alpha(X)$ for all $s\in G$ is necessarily contained in $G\bar{\rtimes}X$. Example \ref{ex:2} allows us to interpret this result as a ``slice map property'' with respect to the Fubini crossed product.

\begin{defn} A $W^*$-dynamical system $(M,G,\alpha)$ has the \textit{slice map property} if $G\bar{\rtimes}X=G\bar{\rtimes}_{\mc{F}}X$ for every $G$-invariant weak* closed subspace $X\subseteq M$. 
\end{defn}

\begin{example}\label{ex:AP} Let $G$ be a locally compact group with the AP. As we shall see, by Corollary \ref{c:actionAP} below, every $W^*$-dynamical system $(M,G,\alpha)$ has the slice map property. 

After the first version of this paper appeared, this result was obtained using different techniques by Andreou \cite[Proposition 4.3]{A2}.
\end{example}

\begin{example}\label{e:3.7} Let $G$ be a locally compact group acting trivially on $\BH$ for a separable Hilbert space $H$. By Example \ref{ex:1} we have $G\bar{\rtimes}_{\mc{F}} X=VN(G)\oten_{\mc{F}} X$ for any weak* closed subspace of $X$ of $\BH$. Hence, the trivial action has the slice map property if and only if $VN(G)$ has the dual slice map property for weak* closed subspaces of $\BH$, equivalently, $VN(G)$ has the w*OAP. 
%(reference). 
If, in addition, $G$ is inner amenable in the sense of Paterson \cite[2.35H]{Pat2}
%(reference) 
(e.g., $G$ is discrete), then it follows from \cite[Corollary 4.8]{C2} that $G$ has the AP.
\end{example}

\subsection{$C^*$-setting} 

We have the analogous notions in the setting of $C^*$-dynamical systems.

\begin{defn}\label{d:cstarFubini} Let $(A,G,\alpha)$ be a $C^*$-dynamical system. For a $G$-invariant closed subspace $X\subseteq A$ we define the \textit{Fubini crossed product} of $X$ by $G$ as
$$G\rtimes_{\mc{F}}X:=\{T\in G\rtimes A\mid E(u(f)Tu(g))\in \alpha(X), \ f,g\in C_c(G)\}.$$
\end{defn}

\begin{prop}\label{p:UVcstar} Let $(A,G,\alpha)$ be a $C^*$-dynamical system. For any $G$-invariant closed subspace $X\subseteq A$,
$$G\rtimes_{\mc{F}}X=\{T\in G\rtimes A\mid (\rho\ten\id)(T)\in X, \ \rho\in\TC\}.$$
\end{prop}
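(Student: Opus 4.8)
The plan is to mirror the proof of Proposition \ref{p:UV}, the only essential change being that the weak* density (normality) argument available in the von Neumann setting must be replaced by a norm/$\sigma$-weak approximation adapted to $C^*$-algebras. The heart of the matter is the identity
$$\alpha((\om_{g,f}\ten\id)(T))=E(u(f)^*Tu(g)),\qquad T\in G\rtimes A,\ f,g\in C_c(G),$$
where $\om_{g,f}\in\TC$ is the vector functional $S\mapsto\la Sg,f\ra$. Granting this, the proposition follows quickly. Since $f\mapsto f^o$ is a bijection of $C_c(G)$ and $u(f)^*=u(f^o)$, the defining condition $E(u(f)Tu(g))\in\alpha(X)$ for all $f,g\in C_c(G)$ is equivalent to $E(u(f)^*Tu(g))\in\alpha(X)$ for all $f,g\in C_c(G)$, i.e. to $\alpha((\om_{g,f}\ten\id)(T))\in\alpha(X)$. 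As $\alpha$ is injective this says $(\om_{g,f}\ten\id)(T)\in X$ for all $f,g$; and because $X$ is closed, $\mathrm{span}\{\om_{g,f}\mid f,g\in C_c(G)\}$ is dense in $\TC$, and each slice map is contractive, this is in turn equivalent to $(\rho\ten\id)(T)\in X$ for all $\rho\in\TC$.

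To establish the displayed identity I would first record that $(\om_{g,f}\ten\id)(T)\in A$ for every $T\in G\rtimes A$: viewing $G\rtimes A\subseteq M(\mc{K}(\LT)\iten A)$ and realizing $\om_{g,f}$ through rank-one operators attached to $f,g\in C_c(G)\subseteq\LT$, the slice is obtained by sandwiching $T$ with compacts and so lands in the ideal $\mc{K}(\LT)\iten A$, hence in $A$. Next I would verify the identity on the building blocks $\alpha(a)u(r)$ ($a\in A$, $r\in G$), where the computation is purely algebraic and coincides verbatim with the one in the proof of Proposition \ref{p:UV}: both sides equal $\alpha(a\star(\overline{f}\cdot\lm(r)g))$. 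By linearity the identity then holds on the span $\mc{D}_0$ of such elements.

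Finally I would pass from $\mc{D}_0$ to all of $B=G\rtimes A$. Every element of the norm-dense subalgebra $\tilde{\pi}_u\times u(C_c(G,A))$ has the form $T=\int_G\alpha(F(t))u(t)\,dt$ with $F\in C_c(G,A)$, and the associated Riemann sums $S_n\in\mc{D}_0\subseteq B''$ are uniformly bounded and converge to $T$ $\sigma$-weakly (only $\sigma$-weakly, since $t\mapsto u(t)$ is merely strongly continuous); after a further norm approximation the same holds for arbitrary $T\in B$. The key observation is that, by the previous paragraph, the two sequences $\alpha((\om_{g,f}\ten\id)(S_n))$ and $E(u(f)^*S_nu(g))$ are \emph{identical}. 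The latter converges $\sigma$-weakly to $E(u(f)^*Tu(g))$ by the operator-valued-weight analog of property (4) (continuity on bounded subsets of $B''$, as $u(f),u(g)$ lie in the relevant domain). The former converges $\sigma$-weakly to $\alpha((\om_{g,f}\ten\id)(T))$ because $\om_{g,f}$ is normal, so $(\om_{g,f}\ten\id)(S_n)\to(\om_{g,f}\ten\id)(T)$ weakly in $A$ (the limit lying in $A$ by the ideal argument), and $\alpha$ sends weakly convergent sequences in $A$ to $\sigma$-weakly convergent ones in $B''$. Equating the two limits yields the identity for all $T\in B$.

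The main obstacle is precisely this last extension step. In the setting of Proposition \ref{p:UV} one simply invokes normality of $E(u(f)^*(\cdot)u(g))$ to promote the identity from generators to the whole algebra; here no single such continuity is available, and one must combine the weak*-continuity of the slice map with the $\sigma$-weak continuity on bounded sets of the operator-valued-weight side, working inside $B''$, where $E$ is defined, rather than in $B$. Care is also needed at the two subsidiary points flagged above: that vector-functional slices of multipliers land in $A$, and that the integrated generators are approximated $\sigma$-weakly by uniformly bounded elements of $\mc{D}_0$.
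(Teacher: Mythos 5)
Your proposal is correct and follows essentially the same route as the paper: establish the key identity $\alpha((\om_{g,f}\ten\id)(T))=E(u(f)^*Tu(g))$ on generators and then extend it to all of $G\rtimes A$ by combining norm continuity of the slice side with weak* continuity of $E(u(f)^*(\cdot)u(g))$ on bounded sets. The only difference is technical rather than conceptual: the paper verifies the identity directly on the integrated generators $\alpha(x)u(h)$ with $h\in C_c(G)$ (both sides equal $\alpha(x\star(\overline{f}\cdot(h\ast g)))$), which already span a norm-dense subspace of $G\rtimes A$, thereby avoiding your intermediate Riemann-sum passage through the point masses $\alpha(a)u(r)$ inside $(G\rtimes A)''$.
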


\begin{proof} First consider $T\in G\rtimes A$ of the form $\alpha(x)u(h)$, $x\in A$, $h\in C_c(G)$. For $f,g\in C_c(G)$, the exact same integral calculation from the proof of Proposition \ref{p:UV} shows that
$$E(u(f)^*Tu(g))=\alpha(x\star(\overline{f}\cdot(h\ast g))),$$
where $h\ast g$ is the convolution of $h$ and $g$ in $\LO$. Since $\lm(h)\cdot\om_{g,f}|_{\LI}=\overline{f}\cdot(h\ast g)\in\LO$, we have
$$(\om_{g,f}\ten\id)(\alpha(x)u(h))=(\lm(h)\cdot\om_{g,f}|_{\LI}\ten\id)(\alpha(x))=x\star(\overline{f}\cdot(h\ast g)).$$
Thus, 
$$\alpha((\om_{g,f}\ten\id)(T))=E(u(f)^*Tu(g)).$$
Since the left hand side is norm continuous in $T$, and the right hand side is weak* continuous for $T$ in bounded sets (viewing $E$ as an operator-valued weight on $(G\rtimes A)''$), the above equality is valid for every $T\in G\rtimes A$, establishing the claim as in Proposition \ref{p:UV}.
\end{proof}

\begin{prop} Let $(A,G,\alpha)$ be a $C^*$-dynamical system. For any $G$-invariant closed subspace $X\subseteq A$ we have
$$G\rtimes X\subseteq G \rtimes_{\mc{F}}X,$$
where $G\rtimes X=\overline{\la\alpha(X)(C^*_{\lm}(G)\ten 1)\ra}^{\norm{\cdot}}$.
\end{prop}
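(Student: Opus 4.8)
The plan is to prove the inclusion $G\rtimes X\subseteq G\rtimes_{\mc{F}}X$ by the same strategy used in the $W^*$-setting, reducing to elementary tensors and invoking the characterization of the Fubini crossed product from Proposition \ref{p:UVcstar}. First I would recall that $G\rtimes X=\overline{\la\alpha(X)(C^*_{\lm}(G)\ten 1)\ra}^{\norm{\cdot}}$, so the linear span of elements of the form $\alpha(x)u(h)$ with $x\in X$ and $h\in C_c(G)$ is norm dense in $G\rtimes X$ (since $C_c(G)$ generates $C^*_{\lm}(G)$). It therefore suffices, by norm continuity, to verify the containment on such generators.

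\medskip

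For a generator $T=\alpha(x)u(h)$ with $x\in X$ and $h\in C_c(G)$, the computation in the proof of Proposition \ref{p:UVcstar} gives, for all $f,g\in C_c(G)$,
$$E(u(f)^*Tu(g))=\alpha(x\star(\overline{f}\cdot(h\ast g))).$$
The key observation is that $X$ is $G$-invariant and weak* closed, hence closed under the $\LO$-module action $x\mapsto x\star k$ for $k\in\LO$: indeed the module action is built from the co-action $\alpha$ via $x\star k=(k\ten\id)\alpha(x)$, and $G$-invariance of $X$ passes to invariance under this $\LO$-action. Since $\overline{f}\cdot(h\ast g)\in\LO$, it follows that $x\star(\overline{f}\cdot(h\ast g))\in X$, whence $E(u(f)^*Tu(g))\in\alpha(X)$ for all $f,g\in C_c(G)$. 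This is precisely the defining condition of Definition \ref{d:cstarFubini} (after replacing $f$ by a function whose involution $f^o$ realizes $u(f)$, which only amounts to the substitution already built into the integral calculation), so $T\in G\rtimes_{\mc{F}}X$.

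\medskip

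Finally I would promote the containment from generators to all of $G\rtimes X$. The subtlety here, and the step I expect to require the most care, is the continuity of the map $T\mapsto E(u(f)^*Tu(g))$: as noted after Proposition \ref{p:UVcstar}, $E$ is genuinely an operator-valued \emph{weight} on the enveloping von Neumann algebra $(G\rtimes A)''$ rather than a bounded map on $G\rtimes A$, so one cannot simply invoke norm continuity directly. However, the identity $\alpha((\om_{g,f}\ten\id)(T))=E(u(f)^*Tu(g))$ established in Proposition \ref{p:UVcstar} expresses the right-hand side through the slice map $T\mapsto(\om_{g,f}\ten\id)(T)$, which \emph{is} norm continuous on $G\rtimes A$. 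Thus for any net (or sequence) of generators converging in norm to an arbitrary $T\in G\rtimes X$, the images $(\om_{g,f}\ten\id)(T)$ remain in the norm-closed subspace $X$, giving $E(u(f)^*Tu(g))=\alpha((\om_{g,f}\ten\id)(T))\in\alpha(X)$. Hence $T\in G\rtimes_{\mc{F}}X$, completing the proof. Alternatively, appealing directly to Proposition \ref{p:UVcstar}, it suffices to check $(\rho\ten\id)(T)\in X$ for all $\rho\in\TC$, which for generators is exactly the computation above and extends by norm continuity of the slice maps together with norm-closedness of $X$.
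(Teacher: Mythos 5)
Your proof is correct and follows essentially the same route as the paper: verify the defining condition on generators $\alpha(x)u(h)$ via the integral computation from Proposition~\ref{p:UVcstar} together with $\LO$-invariance of $X$, then extend to all of $G\rtimes X$ using the norm continuity of $T\mapsto E(u(f)^*Tu(g))=\alpha((\om_{g,f}\ten\id)(T))$, which is exactly how the paper handles the operator-valued-weight subtlety. (One trivial slip: in the $C^*$-setting $X$ is norm closed rather than weak* closed, but norm closedness is all that is needed to pass from $G$-invariance to $\LO$-invariance.)
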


\begin{proof} By the proof of Proposition \ref{p:UVcstar}, for every $f,g\in C_c(G)$, 
$$\alpha((\om_{g,f}\ten\id)(T))=E(u(f)^*Tu(g)), \ \ \ T\in G\rtimes A.$$
In particular, $E(u(f)^*(\cdot)u(g))$ is norm continuous in $T$, and for $T=\alpha(x)u(h)$, $x\in X$, $h\in C_c(G)$, 
$$E(u(f)^*Tu(g))=\alpha(x\star(\overline{f}\cdot(h\ast g)))\in\alpha(X)$$
%(\vphi\ten\id)\h{\alpha}(u(f^o)\alpha(x)u(h)u(g))\\
%&=(\vphi\ten\id\ten\id)\bigg(\iiint h(r)f^o(s)g(t)(\lm(s)\ten\lm(s)\ten 1)(1\ten \alpha(x))(\lm(rt)\ten\lm(rt)\ten 1) \ dr \ ds \ dt\bigg)\\
%&=(\vphi\ten\id\ten\id)\bigg(\iiint h(r)f^o(s)g(t)(1\ten u(s)\alpha(x)u(rt))(\lm(srt)\ten1\ten 1) \ dr \ ds \ dt\bigg)\\
%&=\iint h(r)f^o(s)g(r^{-1}s^{-1}) u(s)\alpha(x)u(s^{-1}) \ dr \ ds\\
%&=\iint h(r)\overline{f(s^{-1})}g(r^{-1}s^{-1})\Delta(s^{-1}) (1\ten\lm(s))\alpha(x)(1\ten \lm(s^{-1}) \ dr \ ds\\
%&=\iint h(r)\overline{f(s)}g(r^{-1}s)(\lm(s^{-1})\ten1)\alpha(x)(\lm(s)\ten 1) \ dr \ ds \ \ \ (s\mapsto s^{-1})\\
%&=\int \overline{f(s)}(h\ast g)(s)(\lm(s^{-1})\ten1)\alpha(x)(\lm(s)\ten 1) \ ds\\
%&=(\overline{f}\cdot (h\ast g)\ten\id\ten\id)(\Gam\ten\id)(\alpha(x))\\
%&=(\overline{f}\cdot(h\ast g)\ten\id\ten\id)(\id\ten\alpha)(\alpha(x))\\
%&=\alpha(x\star(\overline{f}\cdot(h\ast g))),
%\end{align*}
by $\LO$-invariance of $X$. The norm continuity of $E(u(f)^*(\cdot)u(g))$ ensures the same is true for arbitrary $T\in G \rtimes X$. 

\end{proof}

\begin{defn} Let $(A,G,\alpha)$ be a $C^*$-dynamical system. We say that the action $\alpha$ has the \textit{slice map property} if $G \rtimes X=G\rtimes_{\mc{F}}X$ for every $G$-invariant closed subspace $X\subseteq A$. 
\end{defn}

\begin{example} Let $G$ be a locally compact group with the AP. As we shall see, by Corollary \ref{c:integralrepC*} below, every $C^*$-dynamical system $(A,G,\alpha)$ has the slice map property.
\end{example}

Following \cite[Definition 1.5]{Sie}, a $C^*$-dynamical system $(A,G,\alpha)$ is \textit{exact} if for every $G$-invariant (norm-closed two-sided) ideal $I\lhd A$ the sequence
$$0\rightarrow G\rtimes I\hookrightarrow G\rtimes A\twoheadrightarrow G\rtimes A/I\rightarrow0$$
is exact. A locally compact group $G$ is \textit{exact} if every $C^*$-dynamical system $(A,G,\alpha)$ is exact \cite{KW}.

\begin{prop}\label{p:exactaction} Let $(A,G,\alpha)$ be a $C^*$-dynamical system. If $(A,G,\alpha)$ has the slice map property then it is exact.
\end{prop}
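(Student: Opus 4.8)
The plan is to reduce the assertion to exactness in the middle of the sequence, and to obtain that middle exactness directly from the slice-map characterization of $G\rtimes_{\mc{F}}I$ provided by Proposition \ref{p:UVcstar}. Let $\pi_I:A\to A/I$ be the $G$-equivariant quotient map. It is part of the general theory of reduced crossed products, and requires no hypothesis on $\alpha$, that the induced maps $\iota:G\rtimes I\to G\rtimes A$ and $q:G\rtimes A\to G\rtimes(A/I)$ are, respectively, an injective and a surjective $*$-homomorphism with $q\circ\iota=0$: the first because $G\rtimes(-)$ preserves inclusions of $G$-invariant ideals, the second because the full crossed product functor is exact and $G\rtimes(-)$ is a quotient functor of it. Identifying $G\rtimes I$ with its image under $\iota$, it therefore remains only to establish the reverse inclusion $\ker q\subseteq G\rtimes I$.

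For this, take $T\in\ker q$. By the slice map property together with Proposition \ref{p:UVcstar},
$$G\rtimes I=G\rtimes_{\mc{F}}I=\{S\in G\rtimes A\mid (\rho\ten\id)(S)\in I, \ \rho\in\TC\},$$
so it suffices to show $(\rho\ten\id)(T)\in I$ for every $\rho\in\TC$. First I would record that, under the inclusions $G\rtimes A\subseteq M(\KLT\iten A)$ and $G\rtimes(A/I)\subseteq M(\KLT\iten A/I)$, the map $q$ is precisely the (strictly continuous) extension of $\id\ten\pi_I$: on the generators one has $q(\alpha(a))=\alpha(\pi_I(a))$ and $q(u(s))=u(s)$, which is exactly $\id\ten\pi_I$ once the $G$-equivariance of $\pi_I$ is invoked to rewrite $\alpha$. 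Since slice maps commute with $*$-homomorphisms acting on the opposite leg, for every $\rho\in\TC$,
$$\pi_I\big((\rho\ten\id)(T)\big)=(\rho\ten\id)\big((\id\ten\pi_I)(T)\big)=(\rho\ten\id)(q(T))=0,$$
whence $(\rho\ten\id)(T)\in\ker\pi_I=I$. The slice map property then yields $T\in G\rtimes_{\mc{F}}I=G\rtimes I$, giving $\ker q=G\rtimes I$ and hence exactness.

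The routine ingredients—injectivity of $\iota$, surjectivity of $q$, and $q\circ\iota=0$—are standard and carry no dynamical hypothesis, so the slice map property is genuinely used only for the middle inclusion. The one point that I expect to demand care, and hence the main (though modest) obstacle, is the identification of $q$ with the extension of $\id\ten\pi_I$ at the level of multiplier algebras, together with the attendant well-definedness of the slice maps $(\rho\ten\id)$ on $M(\KLT\iten A)$ and their commutation with $\id\ten\pi_I$. This is exactly the Fubini-type compatibility already underlying Proposition \ref{p:UVcstar} (and it uses that $\KLT$ is nuclear, so that $\KLT\iten(-)$ respects the quotient), so once it is in place the slicing argument itself is immediate.
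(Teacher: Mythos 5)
Your proposal is correct and follows essentially the same route as the paper: both reduce exactness to the inclusion $\ker(\id\rtimes q)\subseteq G\rtimes I$ and obtain it by checking the slice-map characterization of $G\rtimes_{\mc{F}}I$ from Proposition \ref{p:UVcstar}, using that slice maps on the first leg commute with the quotient map on the second leg. The only cosmetic difference is that the paper phrases the key computation in terms of the specific functionals $\om_{g,f^o}$ coming from the twisted slice maps $E(u(f)(\cdot)u(g))$, whereas you work with arbitrary $\rho\in\TC$; these are interchangeable by Proposition \ref{p:UVcstar}.
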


\begin{proof} We follow similar lines to \cite[Proposition 1.6]{Sie}. Let $I\lhd A$ be a $G$-invariant ideal, and let $q:A\twoheadrightarrow A/I$ denote the quotient map. We show that $J:=\mathrm{Ker}(\id\rtimes q)\subseteq G\rtimes I$. 

Fix $f,g\in C_c(G)$ and let $E^A_{f,g}:=\alpha_A^{-1}(E^A(u(f)(\cdot)u(g)))$ and $E^{A/I}_{f,g}:= \alpha_{A/I}^{-1}(E^{A/I}(u(f)(\cdot)u(g)))$. Then $E^A_{f,g}:G\rtimes A\rightarrow A$, and by the proof of Proposition \ref{p:UVcstar}
$$E^A_{f,g}(T)=\alpha_A^{-1}(E^A(u(f^o)^*Tu(g)))=(\om_{g,f^o}\ten\id)(T), \ \ \ T\in G\rtimes A.$$
Similarly for $E^{A/I}_{f,g}$. Hence, for any $T\in G\rtimes A$,
$$E^{A/I}_{f,g}((\id\rtimes q)(T))=(\om_{g,f^o}\ten\id)((\id\rtimes q)(T))=q((\om_{g,f^o}\ten\id)(T))=q\circ E^A_{f,g}(T).$$
Thus, $E^A_{f,g}(J)\subseteq\mathrm{Ker}(q)=I$. Since $f$ and $g$ in $C_c(G)$ were arbitrary and the action has the slice map property,
$$J\subseteq G\rtimes_{\mc{F}} I=G\rtimes I.$$
\end{proof}

\section{Fej\'{e}r Representations in Crossed Products}

We now establish our Fej\'{e}r representation for arbitrary elements in $C^*$- and von Neumann crossed products by locally compact groups with the AP. Throughout the proof we adopt the standard leg notation for fundamental unitaries, e.g., $W_{12}=(W\ten 1)$, $W_{23}=(1\ten W)$, etc.

\begin{thm}\label{t:Landstad} Let $G$ be a locally compact group, and denote by $(f_i)\subseteq C_c(G)_{\norm{\cdot}_1=1}^+$ a symmetric bounded approximate identity for $\LO$. Then the following are equivalent.
\begin{enumerate}
\item $G$ has the AP.
\item There exists a net $(h_j)$ in $A(G) \cap C_c(G)$ such that for every $W^*$-dynamical system $(M,G,\alpha)$,
\begin{eqnarray}\label{e:intrep} T=w^*-\lim_j\lim_i\int_G \frac{h_j(x)}{\Delta(x)}E(u(f_i)Tu(f_i)u(x^{-1}))u(x) \ dx, \ \ \ T\in G\bar{\rtimes} M.
\end{eqnarray}
\end{enumerate}
\end{thm}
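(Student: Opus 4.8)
The plan is to factor the argument into an unconditional ``Fej\'{e}r kernel'' identity and an AP-dependent convergence step, so that the double limit decouples. Concretely, I would first prove the following \emph{unconditional} statement (call it Piece A): for every $h\in A(G)\cap C_c(G)$, every $W^*$-dynamical system $(M,G,\al)$, and every $T\in G\bar{\rtimes}M$,
$$w^*\text{-}\lim_i\int_G \frac{h(x)}{\Delta(x)}E(u(f_i)Tu(f_i)u(x^{-1}))u(x)\,dx=T\cdot h,$$
where $\cdot$ is the right $A(G)$-module action $T\cdot h=(\h{\Theta}(h)\ten\id)(T)=(h\ten\id\ten\id)\h{\alpha}(T)$. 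Granting Piece A, the inner limits in (\ref{e:intrep}) collapse to $T\cdot h_j$, so that (2) becomes the assertion that there is a net $(h_j)\subseteq A(G)\cap C_c(G)$ with $w^*\text{-}\lim_j(T\cdot h_j)=T$ for every $W^*$-system; the remaining task (Piece B) is to show that the existence of such a net is equivalent to the AP.

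For Piece A, fix $i$ and put $S=u(f_i)Tu(f_i)$. Since $f_i\in C_c(G)$, for each $x$ the element $Su(x^{-1})=u(f_i)T\,u(f_i\ast\delta_{x^{-1}})$ is of the form $u(f)Tu(g)$ with $f,g\in C_c(G)$, hence lies in the domain $N_1$ of $E$ and $E(Su(x^{-1}))=(\vphi\ten\id)\h{\alpha}(Su(x^{-1}))$. Using that $\h{\alpha}$ is multiplicative with $\h{\alpha}(u(x^{-1}))=\h{\Gamma}(\lm(x^{-1}))\ten 1=\lm(x^{-1})\ten\lm(x^{-1})\ten 1$, and absorbing the outer $u(x)=\lm(x)\ten 1$ to cancel the second leg, one obtains
$$E(Su(x^{-1}))u(x)=(\vphi\ten\id\ten\id)\big(\h{\alpha}(S)(\lm(x^{-1})\ten 1\ten 1)\big).$$
Integrating against $h(x)/\Delta(x)$ and substituting $x\mapsto x^{-1}$ gives $\int_G \frac{h(x)}{\Delta(x)}\lm(x^{-1})\,dx=\lm(\check{h})$ with $\check{h}(x)=h(x^{-1})$; combined with the Plancherel identity $\vphi(\,\cdot\,\lm(\check{h}))=\la\,\cdot\,,h\ra$ on $VN(G)$ (verified on $\lm(s)$ via $\vphi(\lm(F))=F(e)$), this identifies $\int_G\cdots\,dx$ with $(h\ten\id\ten\id)\h{\alpha}(S)=S\cdot h$. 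Finally, since $(f_i)$ is a symmetric bounded approximate identity, $u(f_i)\to 1$ strong$^*$, whence $u(f_i)Tu(f_i)\to T$ weak$^*$; normality of $\cdot\,h$ then yields $(u(f_i)Tu(f_i))\cdot h\to T\cdot h$, completing Piece A.

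For $(1)\Rightarrow(2)$, choose (as in \cite{HK}) a net $(v_j)\subseteq A(G)\cap C_c(G)$ with $v_j\to 1$ in $\sigma(\Mcb,\Qcb)$ and set $h_j=v_j$. By the Haagerup--Kraus characterization of the AP, this convergence is equivalent to $\h{\Theta}(v_j)\to\id$ in the stable point-weak$^*$ topology, i.e. $(\h{\Theta}(v_j)\ten\id)(T)\to T$ weak$^*$ for all $T\in\BLT\oten M$, in particular on $G\bar{\rtimes}M$. As $T\cdot v_j=(\h{\Theta}(v_j)\ten\id)(T)$, we get $w^*\text{-}\lim_j T\cdot v_j=T$, which with Piece A is exactly (2). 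For the converse $(2)\Rightarrow(1)$, apply (2) to the trivial action on $M=\BH$: by Example \ref{ex:1}, $G\bar{\rtimes}\BH=VN(G)\oten\BH$, and for such $T$ we have $T\cdot h_j=(\h{\Theta}(h_j)\ten\id)(T)=(m_{h_j}^*\ten\id)(T)$ since $\h{\Theta}(h_j)|_{VN(G)}=m_{h_j}^*$. Piece A turns the inner limit into $T\cdot h_j$, so (2) forces $(m_{h_j}^*\ten\id)(T)\to T$ weak$^*$ for all $T\in VN(G)\oten\BH$ --- precisely stable point-weak$^*$ convergence $m_{h_j}^*\to\id$, which by \cite{HK} characterizes the AP.

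The technical heart, and the step I expect to be the main obstacle, is Piece A: the weight $E$ is only densely defined, so justifying the computation $E(Su(x^{-1}))=(\vphi\ten\id)\h{\alpha}(Su(x^{-1}))$ and, above all, the commutation of the $x$-integral with the weight $\vphi$ require care. This is exactly where the regularization by the $u(f_i)$ is essential (to place everything in $N_1$ and to exploit the $\sigma$-continuity of $E$ on bounded sets, property (4)), and where the non-unimodular factor $\Delta(x)^{-1}$ enters, via the substitution $x\mapsto x^{-1}$ and the right invariance of the Plancherel weight. A secondary, more routine, difficulty is the precise passage in $(1)\Rightarrow(2)$ from $\sigma(\Mcb,\Qcb)$-convergence of $(v_j)$ to point-weak$^*$ convergence of $\h{\Theta}(v_j)\ten\id$ on the crossed product, which rests on $\h{\Theta}$ being a weak$^*$-homeomorphism and on the stability of this convergence under tensoring with $\id_M$.
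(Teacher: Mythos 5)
Your overall architecture is right and matches the paper's: an unconditional inner-limit identity (your Piece A) plus the Haagerup--Kraus characterization of the AP for the outer limit in both directions (your Piece B, which you handle correctly, including the converse via the trivial action and \cite[Theorem 1.9(b)]{HK}). Moreover, your claimed fixed-$i$ identity $\int_G \frac{h(x)}{\Delta(x)}E(Su(x^{-1}))u(x)\,dx = S\cdot h$ for $S=u(f_i)Tu(f_i)$ appears to be \emph{true} (one can check it against the paper's Lemma \ref{l:Landstad} via the identity $(\om_{g,\bar h}\ten\id)(BW_{12})=(h\ten\id)(B(\h{\Gamma}(\lm(g))\ten 1))$ for $B\in VN(G)\oten\mc{B}(L^2(G,H))$), and it would actually \emph{simplify} the paper's argument: the paper only proves the weaker identity $\int \frac{h(x)}{\Delta(x)}E(u(f_i)Tu(f_i)u(x^{-1}))u(x)\,dx=(\om_{f_i,\bar h}\ten\id)(\h{\alpha}(u(f_i)T)W_{12})$ and must then spend Lemmas \ref{l:W}--\ref{c:bounded} (complete contractivity of $\Phi_1,\Phi_2$ on $\Mcb$-weighted Haagerup-type tensor products) plus a Jensen/$\ell^2$-summability estimate to replace $W_{12}(f_i\ten\xi_n)$ by $f_i\ten\xi_n$ in the $i$-limit; your exact identity would reduce that whole step to normality of $(\cdot)\cdot h$ together with $u(f_i)Tu(f_i)\to T$ weak*.

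The genuine gap is that you never prove Piece A: you explicitly defer the two steps that constitute its entire content. First, the interchange $\int \frac{h(x)}{\Delta(x)}(\vphi\ten\id\ten\id)\bigl(\h{\alpha}(S)(\lm(x^{-1})\ten1\ten1)\bigr)dx=(\vphi\ten\id\ten\id)\bigl(\h{\alpha}(S)(\lm(\check{h})\ten1\ten1)\bigr)$ is not a formal consequence of anything you cite: $\vphi\ten\id$ is an n.s.f.\ operator-valued weight, and property (4) ($\sigma$-continuity of $T\mapsto E(A^*TB)$ on bounded sets) controls continuity in $T$, not commutation with an integral over the group variable. Second, the ``Plancherel identity'' $\vphi(\,\cdot\,\lm(\check{h}))=\la\cdot,h\ra$ needs a domain argument (one must know $y\lm(\check h)$ lies in $\mc{M}_\vphi$, or reinterpret the statement via the GNS vectors of $\vphi$); verifying it ``on $\lm(s)$'' does not extend to all of $VN(G)$ by any continuity you have established, since $\vphi$ is not normal as a functional. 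The paper resolves precisely these two points in Lemma \ref{l:Landstad} by brute force: for $T=\alpha(m)u(y)$ everything collapses to honest iterated integrals of compactly supported continuous functions where $\vphi(\lm(k))=k(e)$ and classical Fubini applies, and the general case follows by Kaplansky density together with weak* continuity of $E(u(f)(\cdot)u(g))$ on bounded sets. Without supplying an argument of this kind (or an alternative justification of the interchange), your proof of the theorem's central formula is incomplete.
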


\begin{remark} 
If $G$ is weakly amenable, then the net $(h_i)$ can be chosen bounded in the $\Mcb$-norm, and if $G$ is amenable, bounded in the $A(G)$-norm. 
\end{remark} 

\begin{remark} When $G$ is discrete, putting $f_i=\delta_e$, the Fej\'{e}r representation (\ref{e:intrep}) simplifies to 
$$T=w^*-\lim_j\sum_{x\in G} h_j(x)E(Tu(x^{-1}))u(x).$$
\end{remark}

The first step in the proof of Theorem \ref{t:Landstad} is based on Lemma \ref{l:Landstad} below, which in turn is a version of \cite[Lemma 2.9]{L}.

\begin{lem}\label{l:Landstad} Let $(M,G,\alpha)$ be a $W^*$-dynamical system. For $T\in G\bar{\rtimes} M$, $f,g,h\in C_c(G)$, and $\xi,\eta\in \LT\ten H$, we have 
\begin{equation}\label{e:2}\int_G h(x)\la E(u(f)Tu(g)u(x^{-1}))u(x)\xi,\eta\ra=\la\h{\alpha}(u(f)T)(W\ten1)(f\ten \xi),\Delta\cdot\overline{h}\ten\eta\ra.\end{equation}
Consequently,
$$\int_G h(x) E(u(f)Tu(g)u(x^{-1}))u(x) \ dx=(\om_{f,\Delta\cdot\overline{h}}\ten\id)(\h{\alpha}(u(f)T)(W\ten1))\in\mc{B}(\LT\ten H).$$
\end{lem}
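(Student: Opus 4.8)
The plan is to prove the pointwise identity \eqref{e:2} by a direct computation, treating the integral on the left as a vector-valued pairing and recognizing the right-hand side as the $(f,\Delta\cdot\overline{h})$-slice of the operator $\h{\alpha}(u(f)T)(W\ten 1)$. First I would unfold the left-hand side using the formula for the operator-valued weight via the dual coaction, namely $E(S)=(\vphi\ten\id)\h{\alpha}(S)$ applied to $S=u(f)Tu(g)u(x^{-1})$. Because $\h{\alpha}$ is a $*$-homomorphism and $u(g)u(x^{-1})=u(\lm(g)\lm(x^{-1}))=u(g\ast\delta_{x^{-1}})$, I would pull the $x$-dependence out of $T$ and into a convolution/right-translation factor, so that $\h{\alpha}(u(f)Tu(g)u(x^{-1}))=\h{\alpha}(u(f)T)(\lm(g)\lm(x^{-1})\ten\lm(g)\lm(x^{-1})\ten 1)$, using $\h{\Gamma}(\lm(s))=\lm(s)\ten\lm(s)$ on the generators.

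The core computational step is then to integrate against $h(x)$ and against the vectors $\xi,\eta$, and to identify the emergent kernel with the action of the fundamental unitary $W$. The key is strong right invariance of the Plancherel weight $\vphi$ (cited earlier from \cite{KV2}), which converts the integral over $x$ weighted by the first-leg action of $\lm(x^{-1})$ into a single slice against $\vphi$, collapsing one leg. Tracking the modular function carefully — it enters both through $f^o$, through the right regular representation $\rho$, and through the explicit formula $W\xi(s,t)=\xi(s,s^{-1}t)$ — I would match the resulting expression to the inner product $\la\h{\alpha}(u(f)T)(W\ten 1)(f\ten\xi),\Delta\cdot\overline{h}\ten\eta\ra$. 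The structural precedent is \cite[Lemma 2.9]{L}, whose argument I would follow, inserting the extra $u(f)$-factor and the vector $\xi\in\LT\ten H$ in place of the Hilbert space setting there.

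The final ``Consequently'' statement follows formally once \eqref{e:2} is established: since the identity holds for all $\xi,\eta\in\LT\ten H$, and since $\om_{f,\Delta\cdot\overline{h}}$ denotes the functional $\la(\cdot)f,\Delta\cdot\overline{h}\ra$ (a trace-class functional because $f,h\in C_c(G)\subseteq\LT$), the right-hand side of \eqref{e:2} is precisely the matrix coefficient of the slice $(\om_{f,\Delta\cdot\overline{h}}\ten\id)(\h{\alpha}(u(f)T)(W\ten 1))$ paired against $\xi,\eta$. Weak* density of such matrix coefficients then upgrades the scalar identity to the asserted operator identity in $\mc{B}(\LT\ten H)$.

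The main obstacle I anticipate is the bookkeeping of the modular function $\Delta$ through the several substitutions (particularly the interplay between $u(f)^*=u(f^o)$, the half-density factors $\Delta(t)^{1/2}$ in $V$ and $\rho$, and the weight $\Delta\cdot\overline{h}$ appearing in the target functional), together with the careful justification that strong right invariance applies after the leg-notation manipulations. Establishing that the intermediate integrals converge (vector-valued, in the strong or weak topology) and that one may interchange integration with the slice map — legitimate since $f,g,h$ have compact support and $E$ is suitably continuous on the relevant bounded sets by property (4) of the operator-valued weight — is the delicate technical point, but it is routine given the compact-support hypotheses.
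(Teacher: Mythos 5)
Your overall strategy coincides with the paper's: both follow Landstad's Lemma 2.9, unfolding $E$ through the dual coaction and the Plancherel weight, isolating the $x$-dependence in the first leg, and recognizing the resulting kernel as the $(f,\Delta\cdot\overline{h})$-coefficient of $\h{\alpha}(u(f)T)(W\ten 1)$; the ``Consequently'' step is handled the same way in both. However, the displayed identity at the heart of your computation,
$$\h{\alpha}(u(f)Tu(g)u(x^{-1}))=\h{\alpha}(u(f)T)\bigl(\lm(g)\lm(x^{-1})\ten\lm(g)\lm(x^{-1})\ten 1\bigr),$$
is false: the relation $\h{\Gamma}(\lm(s))=\lm(s)\ten\lm(s)$ holds only for group elements $s$, not for $\lm(g)$ with $g\in C_c(G)$. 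One has $\h{\alpha}(u(g)u(x^{-1}))=\h{\Gamma}(\lm(g)\lm(x^{-1}))\ten 1=\int g(t)\,\lm(tx^{-1})\ten\lm(tx^{-1})\ten 1\,dt$, a diagonal integral rather than an elementary tensor; replacing it by $\lm(g)\lm(x^{-1})\ten\lm(g)\lm(x^{-1})$ introduces a spurious second copy of $g$ and yields the wrong kernel after slicing with $\vphi$. The correct maneuver (which the paper uses) is to absorb the trailing $u(x)$ first, via $\h{\alpha}(u(x^{-1}))(1\ten u(x))=\lm(x^{-1})\ten 1$, so that the $x$-dependence sits only in the first leg as $(\lm(x^{-1})\ten 1\ten 1)$, while keeping the $s,t$-integrals coming from $\h{\alpha}(u(f))$ and $\h{\Gamma}(\lm(g))$ explicit.

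Two further points. First, the paper does not invoke strong right invariance here; it uses the elementary evaluation property of the Plancherel weight ($\vphi(\lm(F))=F(e)$ on suitable $F$) after a change of variables that produces the factor $\Delta(x)$ --- this is precisely where the weight $\Delta\cdot\overline{h}$ on the right-hand side comes from. Your plan to route the argument through strong right invariance is not obviously unworkable, but it is not the mechanism that actually collapses the first leg in this computation. Second, the paper carries out the verification for generators $T=\alpha(m)u(y)$ (the explicit form of $T$ genuinely enters the intermediate integrals) and then extends to arbitrary $T\in G\bar{\rtimes}M$ by norm continuity of $E(u(f)(\cdot)u(g))$ on $C^*(\alpha(M),u(G))$ together with Kaplansky density and weak* continuity on bounded sets; your sketch keeps $T$ general throughout, which would require a separate justification that $\h{\alpha}(u(f)Tu(g))$ lies in the domain of $\vphi\ten\id\ten\id$ and that the integrals may be interchanged.
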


\begin{proof} First consider the case when $T=\alpha(m)u(y)$ for some $m\in M$ and $y\in G$. Then for each $x\in G$,
\begin{align*}&\la E(u(f)Tu(g)u(x^{-1}))u(x)\xi,\eta\ra=\la(\vphi\ten\id\ten\id)(\h{\alpha}(u(f)Tu(g)u(x^{-1}))(1\ten u(x)))\xi,\eta\ra\\
&=\la(\vphi\ten\id\ten\id)(\h{\alpha}(u(f)Tu(g))(\lm(x^{-1})\ten1\ten1))\xi,\eta\ra\\
&=\bigg\la(\vphi\ten\id\ten\id)\bigg(\iint f(s)g(t)(\lm(s)\ten u(s))\h{\alpha}(T)(\lm(tx^{-1})\ten u(t)) \ ds \ dt\bigg)\xi,\eta\bigg\ra\\
&=\bigg\la(\vphi\ten\id\ten\id)\bigg(\iint f(s)g(t)(1\ten u(s)\alpha(m)u(yt))(\lm(sytx^{-1})\ten1\ten1) \ ds \ dt\bigg)\xi,\eta\bigg\ra\\
&=\vphi\bigg(\iint f(s)g(t)\la u(s)\alpha(m)u(yt)\xi,\eta\ra\lm(sytx^{-1}) \ ds \ dt\bigg)\\
&=\Delta(x)\vphi\bigg(\iint f(s)g(y^{-1}s^{-1}tx)\la u(s)\alpha(m)u(s^{-1}tx)\xi,\eta\ra\lm(t) \ ds \ dt\bigg)\\
&=\Delta(x)\int f(s)g(y^{-1}s^{-1}x)\la u(s)\alpha(m)u(s^{-1}x)\xi,\eta\ra \ ds\\
&=\Delta(x)\int f(s)g(y^{-1}s^{-1}x)\la u(s)Tu(y^{-1}s^{-1}x)\xi,\eta\ra \ ds.
\end{align*}
Thus,
\begin{align*}
&\int h(x)\la E(u(f)Tu(g)u(x^{-1}))u(x)\xi,\eta\ra \ dx\\
&=\iint \Delta\cdot h(x) f(s)g(y^{-1}s^{-1}x)\la u(s)Tu(y^{-1}s^{-1}x)\xi,\eta\ra \ ds \ dx\\
&=\iiint \Delta\cdot h(x) f(s)g(y^{-1}s^{-1}x)\la (u(y^{-1}s^{-1}x)\xi)(t),(T^*u(s)^*\eta)(t)\ra \ dt \ ds \ dx\\
&=\iiint f(s)\la W_{12}(g\ten \xi)(y^{-1}s^{-1}x,t),(1\ten T^*u(s)^*)(\Delta\cdot\overline{h}\ten\eta)(x,t)\ra \ dt \ ds \ dx\\
&=\iiint f(s)\la(\lm(sy)\ten1\ten1)W_{12}(g\ten\xi)(x,t),(1\ten T^*u(s)^*)(\Delta\cdot\overline{h}\ten\eta)(x,t)\ra \ dt \ ds \ dx\\
&=\int f(s)\la(\lm(sy)\ten1\ten1)W_{12}(g\ten\xi),(1\ten T^*u(s)^*)(\Delta\cdot\overline{h}\ten\eta)\ra \ ds\\
&=\int f(s)\la(\lm(sy)\ten u(s)T)W_{12}(g\ten\xi),\Delta\cdot\overline{h}\ten\eta\ra \ ds\\
&=\int f(s)\la\h{\alpha}(u(s)T)W_{12}(g\ten\xi),\Delta\cdot\overline{h}\ten\eta\ra \ ds\\
&=\la\h{\alpha}(u(f)T)W_{12}(g\ten\xi),\Delta\cdot\overline{h}\ten\eta\ra.
\end{align*}
By norm continuity of $E(u(f)(\cdot)u(g))$, the result holds for $T$ belonging to $C^*(\alpha(M),u(G))$, and then an application of Kaplansky's density theorem together with the weak* continuity of $E(u(f)(\cdot)u(g))$ on bounded sets establishes the result for arbitrary $T\in G\bar{\rtimes}M$.
\end{proof}

The next step in the proof of Theorem \ref{t:Landstad} is to reinterpret the Hilbert space inner product in (\ref{e:2}) as a particular operator space duality which behaves well as we let $f$ and $h$ vary accordingly. To this end, we consider the following maps on $\LI\ten L^2(G,H)$ induced from the fundamental unitary $\h{W}$:
\begin{equation}\label{e:hW}\Phi_1(f\ten\xi)(s,t)=f(ts)\xi(t)=\Sigma\Gam(f)(s,t)\xi(t), \ \ \ s,t\in G,\end{equation}
and
\begin{equation}\label{e:W}\Phi_2(f\ten\xi)(s,t)=f(t^{-1}s)\xi(t)=(\id\ten\kappa)\Sigma\Gam(f)(s,t)\xi(t), \ \ \ s,t\in G,\end{equation}
where $\kappa(f)(s)=f(s^{-1})$ is the co-involution on $\LI$. Note that on $C_c(G)\ten L^2(G,H)$, the maps $\Phi_1$ and $\Phi_2$ coincide with $(\h{W}\ten1)$ and $(\h{W}^*\ten1)$, respectively.

\begin{lem}\label{l:W} Let $G$ be a locally compact group and $H$ be a Hilbert space. Then $\Phi_1$ induces a complete contraction
$$\Phi_1:\Mcb\pten L^2(G,H)_c\rightarrow\LI\pten L^2(G,H)_c,$$
where $ L^2(G,H)_c$ refers to the column operator space structure on the Hilbert space tensor product $\LT\ten H$.
\end{lem}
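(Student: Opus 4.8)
The plan is to verify complete contractivity by reducing to an explicit matricial estimate and then feeding in the concrete structure of completely bounded multipliers. Since $\Phi_1$ is bilinear and coincides with the unitary $\h{W}\ten 1$ on the dense subspace $C_c(G)\ten L^2(G,H)$ of $\Mcb\pten L^2(G,H)_c$, it suffices to bound the $M_{pq}$-norm of $\Phi_1$ applied to $\varphi\ten z$, for $\varphi\in M_p(\Mcb)$ and $z\in M_q(L^2(G,H)_c)$, and then to extend by continuity.

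For a single elementary tensor with $\norm{\varphi}_{cb}\le 1$, I would invoke the Bo\.{z}ejko--Fendler / Gilbert--Jolissaint representation of $\varphi$: there are a Hilbert space $\mc{K}$ and bounded continuous maps $P,Q\colon G\to\mc{K}$ with $\sup_s\norm{P(s)},\sup_t\norm{Q(t)}\le 1$ and $\varphi(y^{-1}x)=\la P(x),Q(y)\ra$; evaluating at $x=s$, $y=t^{-1}$ gives $\varphi(ts)=\la P(s),Q(t^{-1})\ra$. Writing $P_k(s)=\la P(s),e_k\ra$ and $m_k(t)=\overline{\la Q(t^{-1}),e_k\ra}$ for an orthonormal basis $(e_k)$ of $\mc{K}$, formula (\ref{e:hW}) becomes
$$\Phi_1(\varphi\ten\zeta)=\sum_k P_k\ten(m_k\zeta),$$
where $\sum_k\abs{P_k(s)}^2=\norm{P(s)}^2\le 1$ and $\sum_k\abs{m_k(t)}^2=\norm{Q(t^{-1})}^2\le 1$. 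Thus the left legs $(P_k)$ form a row in the unit ball of $R_{\mc{K}}(\LI)$ (the row indexed by the basis of $\mc{K}$), while $\zeta\mapsto(m_k\zeta)_k$ is a complete contraction of $L^2(G,H)_c$ into the auxiliary column Hilbert space indexed by $\mc{K}$, and $\Phi_1(\varphi\ten\zeta)$ is the contraction of this column against the fixed row $(P_k)$.

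The heart of the matter is that this row--column contraction lands completely contractively in the \emph{projective} tensor product $\LI\pten L^2(G,H)_c$ --- a point that is \emph{not} formal, since a naive row-times-column estimate only controls the (strictly smaller) Haagerup norm. I expect this to be exactly where commutativity of $\LI$ is essential. Concretely, to bound $\norm{\sum_k P_k\ten\gamma_k}_{\LI\pten L^2(G,H)_c}$ one tests against $T$ in the unit ball of $\mc{CB}(L^2(G,H)_c,(\LI)^*)$ and is led to control a Gram-type kernel $(s,s')\mapsto\sum_k\overline{P_k(s)}P_k(s')$ paired against the Gram kernel of $T$; because $\LI$ is abelian, the diagonal $\sum_k\abs{P_k(s)}^2$ and the off-diagonal terms are all dominated by $1$, and a Cauchy--Schwarz/positivity argument then yields $\norm{\sum_k P_k\ten\gamma_k}_{\pten}\le\norm{(P_k)}_{R_{\mc{K}}(\LI)}\,\norm{(\gamma_k)}_{C_{\mc{K}}(L^2(G,H)_c)}$.

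The remaining work, and the main obstacle, is to carry this estimate out at all matrix levels. I would use the matricial form of the multiplier representation to factor a general $[\varphi_{ij}]\in M_p(\Mcb)$ of norm $\le 1$ as $\varphi_{ij}(ts)=\la P_i(s),Q_j(t^{-1})\ra$ with the appropriate row/column bounds on the families $(P_i(s))_i$ and $(Q_j(t))_j$, and then establish the completely contractive version of the above commutative Gram estimate, now with operator-valued Gram matrices and the matrix structure of $z\in M_q(L^2(G,H)_c)$. Getting this operator-space bookkeeping correct --- rather than settling for the Haagerup bound that the factorization superficially suggests --- is the delicate part; the change of variable $t\mapsto t^{-1}$ (a unitary on $L^2(G)$, absorbed into $Q$) and the convergence of the $\mc{K}$-indexed sums are routine by comparison.
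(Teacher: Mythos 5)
Your skeleton is essentially the paper's argument in concrete coordinates: the Gilbert/Jolissaint representation $\varphi(ts)=\la P(s),Q(t^{-1})\ra$ with $\sup_s\norm{P(s)},\sup_t\norm{Q(t)}\le 1$ is exactly the statement that $\Sigma\Gam:\Mcb\rightarrow\LI\whten\LI$ is a (complete) isometry, which is the paper's starting point (citing \cite{Spronk}), and your decomposition $\Phi_1(\varphi\ten\zeta)=\sum_k P_k\ten(m_k\zeta)$ with a row bound on $(P_k)$ and a column bound on $(m_k\zeta)_k$ is the elementwise form of the paper's factorization $\Phi_1=(\id\ten\ev)\circ(\Sigma\Gam\ten\id)$.

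However, the step you single out as ``the heart of the matter'' rests on a misdiagnosis, and the step you defer is exactly where your proof is incomplete. You are right that in general a row--column estimate only controls the Haagerup norm, which is dominated by (not dominating) the projective norm. But here the right-hand factor carries the \emph{column} operator space structure, and for any operator space $X$ and Hilbert space $K$ one has $X\hten K_c\cong X\pten K_c$ completely isometrically --- this is \cite[Proposition 9.3.2]{ER}, which the paper invokes twice. So the ``naive'' Haagerup bound \emph{is} the projective bound; no commutativity of $\LI$, no Gram-kernel positivity, and no Cauchy--Schwarz argument is needed or relevant. (Indeed the lemma, proved the paper's way, has nothing to do with $\LI$ being abelian.) Your proposed commutativity-based argument would at best reprove a special case of this identity by hand, and as sketched it is not carried out. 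More seriously, you explicitly leave the matricial extension --- the only thing that makes the map \emph{completely} contractive rather than contractive --- as ``the delicate part'' to be done later. The paper avoids this bookkeeping entirely: since $\Sigma\Gam$ is a complete isometry into $\LI\whten\LI$ and the weak*-Haagerup tensor product is functorial for completely bounded maps \cite[Proposition 3.7]{BS}, all matrix levels come for free from $\Mcb\whten L^2(G,H)_c\cong\Mcb\pten L^2(G,H)_c$ and $\LI\whten L^2(G,H)_c\cong\LI\pten L^2(G,H)_c$. I would recommend replacing your third and fourth paragraphs with these two tensor identifications; as written, the proposal does not yet constitute a proof.
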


\begin{proof} It is well-known that $\Sigma\Gam:\Mcb\rightarrow\LI\whten\LI$ is a complete isometry (see \cite[Corollary 5.5]{Spronk} for the case of $\Gam$). Let $\ev:\LI\ten(\LT\ten H)\rightarrow\LT\ten H$ be the evaluation map $(f,\xi)\mapsto (M_f\ten 1)\xi$. Then $\ev$ extends to a complete contraction $\LI\whten L^2(G,H)_c\rightarrow L^2(G,H)_c$. Indeed, it is given by the following composition
\begin{align*}\LI\whten L^2(G,H)_c&\subseteq(\BLT\ten 1_H)\whten L^2(G,H)_c\subseteq\mc{B}(\LT\ten H)\whten L^2(G,H)_c\\
&=( L^2(G,H)_c\whten L^2(G,H)_c^*)\whten L^2(G,H)_c \\
&= L^2(G,H)_c\whten( L^2(G,H)_c^*\whten L^2(G,H)_c)\\
&\cong L^2(G,H)_c\whten( L^2(G,H)_c^*\hten L^2(G,H)_c) \ \ \ \textnormal{by \cite[Corollary 3.5]{BS}}\\
&\cong L^2(G,H)_c\whten( L^2(G,H)_c^*\pten L^2(G,H)_c) \ \ \ \textnormal{by \cite[Proposition 9.3.2]{ER}}\\
&\rightarrow  L^2(G,H)_c,
\end{align*}
where in the last line we apply the dual pairing $ L^2(G,H)_c^*\pten L^2(G,H)_c\rightarrow\C$. By \cite[Proposition 3.7]{BS} it follows that
$$(\id_{\LI}\ten\ev):\LI\whten(\LI\whten L^2(G,H)_c)\rightarrow \LI\whten L^2(G,H)_c$$
is completely contractive, and hence, by the canonical identifications used above,
$$(\id_{\LI}\ten\ev):\LI\whten(\LI\whten\LTc)\rightarrow \LI\pten L^2(G,H)_c$$
is completely contractive. Since $\Mcb$ is a dual operator space, we have 
$$\Mcb\whten L^2(G,H)_c\cong\Mcb\hten L^2(G,H)_c\cong\Mcb\pten L^2(G,H)_c$$
completely isometrically, appealing to \cite[Corollary 3.5]{BS} and \cite[Proposition 9.3.2]{ER} once again. Hence, \cite[Proposition 3.7]{BS} entails that
$$(\Sigma\Gam\ten\id_{ L^2(G,H)_c}):\Mcb\pten L^2(G,H)_c\rightarrow\LI\whten(\LI\whten L^2(G,H)_c)$$
is completely contractive. From (\ref{e:hW}) it follows that 
$$\Phi_1=(\id_{\LI}\ten\ev)\circ(\Sigma\Gam\ten\id_{ L^2(G,H)_c})$$
on $\Mcb\ten L^2(G,H)_c$. It therefore extends to a complete contraction
$$\Mcb\pten L^2(G,H)_c\rightarrow\LI\pten L^2(G,H)_c.$$
\end{proof}

\begin{lem}\label{l:W2} Let $G$ be a locally compact group and $H$ be a Hilbert space. Then $\Phi_2$ induces a complete contraction
$$\Phi_2:\LI\pten L^2(G,H)_c\rightarrow\LI\oten L^2(G,H)_c.$$
\end{lem}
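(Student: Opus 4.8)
The plan is to mimic the factorization used for $\Phi_1$ in Lemma \ref{l:W}, exploiting the fact that the map governing $\Phi_2$ is now a genuine normal $*$-homomorphism on all of $\LI$, rather than merely a complete isometry on $\Mcb$. Formula (\ref{e:W}) exhibits $\Phi_2$ as the composition
$$\Phi_2=(\id_{\LI}\ten\ev)\circ((\id\ten\kappa)\Sigma\Gam\ten\id_{L^2(G,H)_c}),$$
where $\ev$ is the same evaluation map from the proof of Lemma \ref{l:W}. First I would observe that $(\id\ten\kappa)\Sigma\Gam:\LI\to\LI\oten\LI$ is a normal unital $*$-homomorphism: $\Gam$ is the co-multiplication, $\Sigma$ is the flip, and $\kappa$ is the normal $*$-automorphism of $\LI$ induced by inversion, so the composite is a normal $*$-homomorphism into the weak*-spatial tensor product $\LI\oten\LI$, in particular a complete contraction. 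This is exactly where the argument diverges from Lemma \ref{l:W}: because the relevant co-multiplication now maps into the von Neumann tensor product, all of $\LI$ is admissible in the first leg and the output lands in a weak*-spatial (rather than projective) tensor product.

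The technical heart is the bookkeeping that isolates the second leg for evaluation while leaving the first leg intact in a weak*-spatial factor. Here I would use the coincidence of tensor products with a column Hilbert space factor: for any dual operator space $X^*$,
$$X^*\oten L^2(G,H)_c\cong X^*\whten L^2(G,H)_c\cong X^*\pten L^2(G,H)_c$$
completely isometrically, appealing to \cite[Corollary 3.5]{BS} and \cite[Proposition 9.3.2]{ER} as in Lemma \ref{l:W}. Applying this with $X^*=\LI\oten\LI$ and with $X^*=\LI$, together with associativity of the weak*-spatial tensor product, I would produce a completely isometric identification
$$(\LI\oten\LI)\pten L^2(G,H)_c\cong\LI\oten(\LI\pten L^2(G,H)_c).$$
By functoriality of $\pten$, the map $(\id\ten\kappa)\Sigma\Gam\ten\id$ is a complete contraction from $\LI\pten L^2(G,H)_c$ into the left-hand side, so it remains only to evaluate the inner factor.

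For the final step I would invoke that $\ev:\LI\pten L^2(G,H)_c\cong\LI\whten L^2(G,H)_c\to L^2(G,H)_c$ is a normal complete contraction: the complete contractivity is precisely what was established in Lemma \ref{l:W}, and normality follows since the underlying map is the weak*-continuous module action of $\LI$ on $L^2(G,H)$. Functoriality of the weak*-spatial tensor product for normal complete contractions then gives that $\id_{\LI}\oten\ev:\LI\oten(\LI\pten L^2(G,H)_c)\to\LI\oten L^2(G,H)_c$ is a complete contraction, and composing the three maps yields the desired $\Phi_2:\LI\pten L^2(G,H)_c\to\LI\oten L^2(G,H)_c$. I expect the main obstacle to be justifying the displayed re-association rigorously, i.e.\ that the weak*-spatial, weak*-Haagerup, and projective tensor products genuinely coincide and re-associate when one factor is the column Hilbert space $L^2(G,H)_c$, and that the target is correctly the weak*-spatial product. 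As a concrete sanity check, one can bypass this and verify the contraction directly from a weak*-Haagerup decomposition $u=\sum_i f_i\ten\xi_i$ via the Cauchy--Schwarz estimate $\int\|\sum_i f_i(t^{-1}s)\xi_i(t)\|_H^2\,dt\leq\|\sum_i f_if_i^*\|_\infty\sum_i\|\xi_i\|^2$, which reproduces $\|\Phi_2(u)\|\leq\|u\|$ (and, with matrix entries, its complete analogue), the output lying in $\LI\oten L^2(G,H)_c$ by weak* closedness.
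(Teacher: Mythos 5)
Your factorization $\Phi_2=(\id_{\LI}\ten\ev)\circ((\id\ten\kappa)\Sigma\Gam\ten\id)$ is the right starting point, and you are correct that $(\id\ten\kappa)\Sigma\Gam$ is a normal $*$-homomorphism into $\LI\oten\LI$, hence completely contractive. The gap is exactly where you anticipate it: the claimed chain $X^*\oten L^2(G,H)_c\cong X^*\whten L^2(G,H)_c\cong X^*\pten L^2(G,H)_c$ fails at the first link. The second identification does hold (the $\ell^2$-condition on the column leg forces the tails of a $w^*$-Haagerup decomposition to vanish in the Haagerup norm, so $X^*\whten H_c=X^*\hten H_c=X^*\pten H_c$; this is what the paper extracts from \cite[Corollary 3.5]{BS} and \cite[Proposition 9.3.2]{ER}), but $X^*\oten H_c$ is strictly larger in general. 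For $X^*=\ell^\infty$ and $H=\ell^2$, the element $\sum_i\delta_i\ten e_i$ lies in $\ell^\infty\oten\ell^2_c$ (its partial sums are contractions converging weak*), yet it admits no decomposition $\sum_j y_j\ten\eta_j$ with $\norm{\sum_j y_jy_j^*}=R^2<\infty$ and $\sum_j\norm{\eta_j}^2<\infty$: slicing in the first leg at $k$ gives $\sum_j y_j(k)\eta_j=e_k$, whence by Cauchy--Schwarz $\sum_j|\la\eta_j,e_k\ra|^2\geq R^{-2}$ for every $k$, contradicting summability. Note also that if your identification were true, the distinction between the targets of Lemma \ref{l:W} ($\pten$) and Lemma \ref{l:W2} ($\oten$) would collapse, which is precisely the point of the two-lemma structure. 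Consequently the re-association $(\LI\oten\LI)\pten L^2(G,H)_c\cong\LI\oten(\LI\pten L^2(G,H)_c)$ is unjustified (and false), and the subsequent $\id_{\LI}\oten\ev$ has nothing to act on. The paper's proof is designed to get around exactly this: it builds a multiplication map $m:(\LI\oten\LI)\pten(\LI\pten L^2(G,H)_c)\rightarrow\LI\oten L^2(G,H)_c$ from the left-multiplication representations $\pi_1,\pi_2$, using that $\mc{CB}(\LI)=(\LI\pten\LO)^*$ has the dual slice map property (OAP of $\LI\pten\LO$), and then writes $\Phi_2=m\circ((\id\ten\kappa)\Sigma\Gam\ten\id)\circ i$ with $i(f\ten\eta)=f\ten(1\ten\eta)$.

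Your closing ``sanity check'' is the salvageable part, and is in fact closer to a complete proof than your main argument. Since $\LI\pten L^2(G,H)_c=\LI\hten L^2(G,H)_c$, every element admits decompositions $u=\sum_i f_i\ten\xi_i$ with $\norm{\sum_i f_if_i^*}^{1/2}(\sum_i\norm{\xi_i}^2)^{1/2}$ arbitrarily close to $\norm{u}$, and your pointwise Cauchy--Schwarz estimate gives $\norm{\Phi_2(u)(s,\cdot)}_{L^2(G,H)}^2\leq\norm{\sum_i f_if_i^*}_\infty\sum_i\norm{\xi_i}^2$ for a.e.\ $s$, which is precisely membership of $\Phi_2(u)$ in $\LI\oten L^2(G,H)_c$ with the correct bound. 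To make this a proof you still need to check well-definedness (independence of the decomposition, e.g.\ by testing against vector functionals) and to run the matricial version of the estimate to obtain \emph{complete} contractivity; both are routine but not free. Written out, this would be a genuinely different and more elementary argument than the paper's, avoiding the OAP of $\LI\pten\LO$ altogether.
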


\begin{proof} Let $\pi_1:\LI\rightarrow\mc{CB}(\LI)$ and $\pi_2:\LI\rightarrow\mc{CB}( L^2(G,H)_c)$ be the representations given by left multiplication, where the latter maps $f\mapsto M_f\ten 1$. Since $\pi_1$ and $\pi_2$ are both weak*-weak* continuous, the map $\pi_1\ten\pi_2$ extends to a complete contraction
$$\pi_1\ten\pi_2:\LI\oten\LI\rightarrow\mc{CB}(\LI)\oten\mc{CB}( L^2(G,H)_c).$$
Since $\LI\pten\LO$ has the OAP, its dual $\mc{CB}(\LI)=(\LI\pten\LO)^*$ has the dual slice map property so that
\begin{align*}\mc{CB}(\LI)\oten\mc{CB}( L^2(G,H)_c)&=(\LI\pten\LO)^*\oten( L^2(G,H)_c\pten L^2(G,H)_c^*)^*\\
&=((\LI\pten\LO)\pten( L^2(G,H)_c\pten L^2(G,H)_c^*))^*\\
&=((\LI\pten L^2(G,H)_c)\pten(\LO\pten L^2(G,H)_c^*))^*\\
&=\mc{CB}(\LI\pten L^2(G,H)_c,\LI\oten  L^2(G,H)_c).
\end{align*}
By universality of the operator space projective tensor product it follows that $\pi_1\ten\pi_2$ induces a complete contraction 
$$m:(\LI\oten\LI)\pten(\LI\pten L^2(G,H)_c)\rightarrow\LI\oten  L^2(G,H)_c.$$

By equation (\ref{e:W}), we see that 
$$\Phi_2=m\circ((\id\ten\kappa)\Sigma\Gam\ten\id_{\LI\oten L^2(G,H)_c})\circ i$$ 
on $\LI\ten L^2(G,H)_c$, where 
$$i:\LI\pten L^2(G,H)_c\ni f\ten\eta\mapsto f\ten (1\ten\eta)\in\LI\pten(\LI\oten L^2(G,H)_c)$$
and
$$((\id\ten\kappa)\Sigma\Gam\ten\id):\LI\pten(\LI\oten L^2(G,H)_c)\rightarrow(\LI\oten\LI)\pten(\LI\oten L^2(G,H)_c)$$
are complete contractions.
\end{proof}

\begin{cor}\label{c:bounded} Let $G$ be a locally compact group and $H$ be a Hilbert space. For $T\in \mc{B}(L^2(G,H))$, 
$$\norm{\Phi_2\circ(1\ten T)\circ\Phi_1:\Mcb\pten L^2(G,H)_c\rightarrow\LI\oten L^2(G,H)_c}_{cb}\leq \norm{T}.$$
\end{cor}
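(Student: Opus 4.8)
The plan is to read the composite $\Phi_2\circ(1\ten T)\circ\Phi_1$ as a chain of three completely bounded maps and to bound its cb-norm by the product of the three individual cb-norms. The two outer maps are already under control: by Lemma \ref{l:W} the map $\Phi_1:\Mcb\pten L^2(G,H)_c\to\LI\pten L^2(G,H)_c$ is a complete contraction, and by Lemma \ref{l:W2} the map $\Phi_2:\LI\pten L^2(G,H)_c\to\LI\oten L^2(G,H)_c$ is a complete contraction. So everything reduces to estimating the middle map, which sends $\LI\pten L^2(G,H)_c$ to itself.

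First I would identify this middle map precisely as $\id_{\LI}\ten T$: on an elementary tensor $f\ten\xi$ it acts by $f\ten T\xi$, so the symbol $1$ denotes the identity on the $\LI$-leg while $T$ acts on the column leg $L^2(G,H)_c$. The key input is the standard operator space fact that on a column Hilbert space every bounded operator is automatically completely bounded with cb-norm equal to its operator norm; applied to $L^2(G,H)_c$ this gives $\norm{T}_{cb}=\norm{T}$.

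Next I would invoke functoriality of the operator space projective tensor product: for completely bounded $u$ and $v$ the induced map $u\ten v$ on the projective tensor product satisfies $\norm{u\ten v}_{cb}\leq\norm{u}_{cb}\norm{v}_{cb}$. Taking $u=\id_{\LI}$ and $v=T$ yields $\norm{\id_{\LI}\ten T}_{cb}\leq\norm{\id_{\LI}}_{cb}\norm{T}_{cb}=\norm{T}$.

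Finally, composing and using submultiplicativity of the cb-norm gives
$$\norm{\Phi_2\circ(1\ten T)\circ\Phi_1}_{cb}\leq\norm{\Phi_2}_{cb}\,\norm{1\ten T}_{cb}\,\norm{\Phi_1}_{cb}\leq 1\cdot\norm{T}\cdot 1=\norm{T},$$
as claimed. I do not anticipate a genuine obstacle here: the result is a purely formal consequence of Lemmas \ref{l:W} and \ref{l:W2} once the middle map is correctly identified, and the only nontrivial external ingredient is the column-space identity $\norm{T}_{cb}=\norm{T}$.
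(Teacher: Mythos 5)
Your proof is correct and follows exactly the route the paper intends: the corollary is stated without proof as an immediate consequence of Lemmas \ref{l:W} and \ref{l:W2}, and your argument supplies precisely the missing routine steps (the identification of the middle map as $\id_{\LI}\ten T$, the fact that $\mc{CB}(L^2(G,H)_c)=\mc{B}(L^2(G,H))$ isometrically, functoriality of $\pten$, and submultiplicativity of the cb-norm under composition).
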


\begin{proof}[Proof of Theorem \ref{t:Landstad}] Suppose $G$ has the AP. Note that for any $h\in A(G)\cap C_c(G)$, 
$$\om_{f_i,\overline{h}}|_{VN(G)}=h\ast\check{f_i}=h\ast f_i\in A(G),$$
and $h\ast f_i(s)=\la\lm(s)\lm(f_i)\eta,\zeta\ra$, $s\in G$, where $\la\lm(\cdot)\eta,\zeta\ra$ is a representation of $h$. Hence
$$\norm{h\ast f_i-h}_{A(G)}\rightarrow 0.$$
By the AP pick a net $(h_j)$ of real-valued functions in $A(G)\cap C_c(G)$ converging to 1 in the weak* topology of $\Mcb$. 

Fix $T\in G\bar{\rtimes}M$ and a positive $\rho\in\mc{T}(\LT\ten H)$. Write $\rho=\sum_{n=1}^\infty \xi_n\xi_n^*$ for a sequence $(\xi_n)$ of vectors in $\LT\ten H$ satisfying $\sum_{n=1}^\infty\norm{\xi_n}^2=\norm{\rho}$.

For every $n\in\N$ we have
$$\norm{W_{12}(f_i\ten\xi_n)-f_i\ten\xi_n}_{\LO\ten^\gamma\overline{\LT\ten H}}=\int_G f_i(s)\norm{(\lm(s)\ten 1)\xi_n-\xi_n} \ ds,$$
where $\ten^\gamma$ is the Banach space projective tensor product, so by Jensen's inequality
$$\norm{W_{12}(f_i\ten\xi_n)-f_i\ten\xi_n}_{\LO\ten^\gamma\overline{\LT\ten H}}^2\leq\int_G f_i(s)\norm{(\lm(s)\ten 1)\xi_n-\xi_n}^2 \ ds.$$
Given $\ep>0$, pick $N\in\N$ such that $\sum_{n=N+1}^\infty\norm{\xi_n}^2<\ep/8$. The function 
$$g_\ep(s)=\sum_{n=1}^N\norm{(\lm(s)\ten 1)\xi_n-\xi_n}^2$$
is right uniformly continuous and satisfies $g_\ep(e)=0$. Hence, 
$$\int_G f_i(s)g_\ep(s)=g_\ep\ast\check{f_i}(e)=g_\ep\ast f_i(e)\rightarrow g_\ep(e)=0.$$
Pick $i_\ep$ such that
$$\int_G f_i(s)g_\ep(s)<\frac{\ep}{2}$$
for all $i\geq i_\ep$. Then for all $i\geq i_\ep$ we have
\begin{align*}&\sum_{n=1}^\infty\norm{W_{12}(f_i\ten\xi_n)-f_i\ten\xi_n}_{\LO\ten^\gamma\overline{\LT\ten H}}^2\\
&\leq\sum_{n=1}^\infty\int_G f_i(s)\norm{(\lm(s)\ten 1)\xi_n-\xi_n}^2 \ ds\\
&=\sum_{n=1}^N\int_G f_i(s)\norm{(\lm(s)\ten 1)\xi_n-\xi_n}^2 \ ds+\sum_{n=N+1}^\infty\int_G f_i(s)\norm{(\lm(s)\ten 1)\xi_n-\xi_n}^2 \ ds\\
&\leq\int_G f_i(s)\bigg(\sum_{n=1}^N\norm{(\lm(s)\ten 1)\xi_n-\xi_n}^2\bigg) ds+\sum_{n=N+1}^\infty 4\norm{\xi_n}^2\\
&<\int_G f_i(s)g_\ep(s) \ ds +\frac{\ep}{2}\\
&<\ep.
\end{align*}
Hence,
$$\sum_{n=1}^\infty\norm{W_{12}(f_i\ten\xi_n)-f_i\ten\xi_n}_{\LO\ten^\gamma\overline{\LT\ten H}}^2\rightarrow0.$$
%Now, for each $n\in\N$, $i$ and $j$ we have
%\begin{align*}\la\h{\alpha}(u(f_i)T)\h{W}^*_{23}(\xi_n\ten f_i),\xi_n\ten h_j\ra
%&=\la W_{23}(u(f_i)T\ten 1)W_{23}^*\h{W}_{23}^*(\xi_n\ten f_i),\xi_n\ten h_j\ra\\
%&=\la W_{23}(u(f_i)T\ten 1)W_{23}^*\h{W}_{23}^*\sigma_{13}(f_i\ten\sigma\xi_n),\sigma_{13}(h_j\ten\sigma\xi_n)\ra\\
%&=\la\h{W}_{12}^*(\sigma_{13}(u(f_i)T\ten 1)\sigma_{13})\h{W}_{12}W_{12}(f_i\ten\sigma\xi_n),(h_j\ten\sigma\xi_n)\ra\\
%&=\la W_{12}(f_i\ten\sigma\xi_n),\h{W}_{12}^*(\sigma_{13}(T^*u(f_i)^*\ten 1)\sigma_{13})\h{W}_{12}(h_j\ten\sigma\xi_n)\ra,\\
%\end{align*}
Recall that on $C_c(G)\ten L^2(G,H)$, the maps $\Phi_1$ and $\Phi_2$ coincide with $(\h{W}\ten1)$ and $(\h{W}^*\ten1)$, respectively. Corollary \ref{c:bounded} then entails
\begin{align*}&|\la\h{\alpha}(u(f_i)T)(W_{12}(f_i\ten\xi_n)-(f_i\ten\xi_n)),h_j\ten\xi_n\ra|\\
&=|\la W_{12}(f_i\ten\xi_n)-(f_i\ten\xi_n),\h{W}_{12}^*(1\ten T^*u(f_i)^*))\h{W}_{12}(h_j\ten\xi_n)\ra|\\
&\leq\norm{W_{12}(f_i\ten\xi_n)-(f_i\ten\xi_n)}_{\LO\pten \overline{L^2(G,H)_r}}\norm{\Phi_2(1\ten T^*u(f_i)^*))\Phi_1(h_j\ten\xi_n)}_{\LI\oten L^2(G,H)_c}\\
&\leq\norm{W_{12}(f_i\ten\xi_n)-(f_i\ten\xi_n)}_{\LO\ten^\gamma\overline{L^2(G,H)_r}}\norm{u(f_i)}\norm{T}\norm{h_j\ten\xi_n}_{\Mcb\pten L^2(G,H)_c}\\
&\leq\norm{W_{12}(f_i\ten\xi_n)-(f_i\ten\xi_n)}_{\LO\ten^\gamma\overline{L^2(G,H)_r}}\norm{T}\norm{h_j}_{\Mcb}\norm{\xi_n}_{\LT\ten H},
\end{align*}
where $L^2(G,H)_r$ refers to the row operator space structure of $L^2(G,H)$. Hence, for every $j$, we have
\begin{align*}
&\sum_{n=1}^\infty|\la\h{\alpha}(u(f_i)T)(W_{12}(f_i\ten\xi_n)-(f_i\ten\xi_n)),h_j\ten\xi_n\ra|\\
&\leq\norm{T}\norm{h_j}_{\Mcb}\sum_{n=1}^\infty\norm{W_{12}(f_i\ten\xi_n)-(f_i\ten\xi_n)}_{\LO\ten^\gamma\overline{L^2(G,H)_r}}\norm{\xi_n}\\
&\leq\norm{T}\norm{h_j}_{\Mcb}\bigg(\sum_{n=1}^\infty\norm{W_{12}(f_i\ten\xi_n)-(f_i\ten\xi_n)}_{\LO\ten^\gamma\overline{L^2(G,H)_r}}^2\bigg)^{1/2}\bigg(\sum_{n=1}^\infty\norm{\xi_n}^2\bigg)^{1/2}\\
&\rightarrow0.
\end{align*}

Putting things together, for each $j$ 
\begin{align*}&\lim_i\int_G\frac{h_j(x)}{\Delta(x)}\la E(u(f_i)Tu(f_i)u(x^{-1}))u(x),\rho\ra \ dx=\lim_i\la(\om_{f_i,h_j}\ten\id)(\h{\alpha}(u(f_i)T)W_{12}),\rho\ra\\
&=\lim_i\sum_{n=1}^\infty\la\h{\alpha}(u(f_i)T)W_{12}(f_i\ten\xi_n),h_j\ten\xi_n\ra=\lim_i\sum_{n=1}^\infty\la\h{\alpha}(u(f_i)T)(f_i\ten\xi_n),h_j\ten\xi_n\ra\\
&=\lim_i\sum_{n=1}^\infty\la(\om_{f_i,h_j}\ten\id)\h{\alpha}(u(f_i)T)\xi_n,\xi_n\ra=\lim_i\la(h_j\ast f_i\ten\id)\h{\alpha}(u(f_i)T),\rho\ra=\la(h_j\ten\id)\h{\alpha}(T),\rho\ra,
\end{align*}
where in the last equality we used the norm continuity of the map
$$A(G)\ni\psi\mapsto(T\mapsto T\cdot\psi)\in\mc{CB}(G\bar{\rtimes}M).$$
Now, $((h_j\ten\id)\h{\alpha}(T)=T\cdot h_j=(\h{\Theta}(h_j)\ten\id)(T)$. Letting $m_{h_j}=\h{\Theta}(h_j)|_{VN(G)}\in\mc{CB}^\sigma(VN(G))$ denote the canonical multiplication map on $VN(G)$, it follows from \cite[Proposition 1.7, Theorem 1.9(b)]{HK} that $(m_{h_j}\ten\id_N)\rightarrow\id_{VN(G)\oten N}$ point weak* for any von Neumann algebra $N$. By the proof of \cite[Proposition 4.3]{JNR} we have
$$(\h{\Theta}(h_j)\ten\id)(S)=(\om_0\ten\id_{\BLT\oten M})(\h{W}_{12}((m_{h_j}\ten\id_{\BLT\oten M})(\h{W}_{12}^*S_{23}\h{W}_{12}))\h{W}_{12}^*)$$
for all $S\in\BLT\oten M$, where $\om_0\in\TC$ is an arbitrary state. Hence,  
$$\lim_j\la(h_j\ten\id)\h{\alpha}(T),\rho\ra=\lim_j\la (\h{\Theta}(h_j)\ten\id)(T),\rho\ra=\la T,\rho\ra$$
By polarization, the above analysis is valid for any trace class operator. Hence,
$$T=w^*-\lim_j\lim_i\int\frac{h_j(x)}{\Delta(x)}E(u(f_i)Tu(f_i)u(x^{-1}))u(x) \ dx.$$

Conversely, suppose (2) holds for every $W^*$-dynamical system $(M,G,\alpha)$. If $G$ acts trivially on a von Neumann algebra $M$, then for every $T\in G\bar{\rtimes} M=VN(G)\oten M$ we have 
$$ T=w^*-\lim_j\lim_i\int_G \frac{h_j(x)}{\Delta(x)}E(u(f_i)Tu(f_i)u(x^{-1}))u(x) \ dx.$$
As in the proof of Theorem \ref{t:Landstad}, for each $j$ it follows that
$$(\h{\Theta}(h_j)\ten\id)(T)=w^*-\lim_i\int_G\frac{h_j(x)}{\Delta(x)} E(u(f_i)Tu(f_i)u(x^{-1}))u(x) \ dx.$$
Thus, $T=w^*-\lim_j (\h{\Theta}(h_j)\ten\id)(T)$, implying that $(\h{\Theta}(h_j))$ converges to $\id_{VN(G)}$ in the stable point weak* topology. By \cite[Theorem 1.9 (b)]{HK}, $G$ has the AP.
\end{proof}

As an immediate application, we obtain:

\begin{cor}\label{c:actionAP} Let $G$ be a locally compact group with the AP. Then every action of $G$ on a von Neumann algebra has the slice map property.
\end{cor}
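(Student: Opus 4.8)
The plan is to prove the nontrivial inclusion $G\bar{\rtimes}_{\mc{F}}X\subseteq G\bar{\rtimes}X$ for an arbitrary $G$-invariant weak* closed subspace $X\subseteq M$, the reverse inclusion being the preceding proposition. Fix $T\in G\bar{\rtimes}_{\mc{F}}X$. Since $G$ has the AP, Theorem~\ref{t:Landstad} supplies a net $(h_j)\subseteq A(G)\cap C_c(G)$ and the Fej\'er representation $T=w^*-\lim_j\lim_i T_{j,i}$, where
$$T_{j,i}:=\int_G\frac{h_j(x)}{\Delta(x)}E(u(f_i)Tu(f_i)u(x^{-1}))u(x)\,dx$$
and $(f_i)$ is the symmetric bounded approximate identity of $\LO$ from the statement. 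Because $G\bar{\rtimes}X$ is weak* closed, it is enough to show that $T_{j,i}\in G\bar{\rtimes}X$ for all $i,j$; passing to the iterated weak* limit then places $T$ in $G\bar{\rtimes}X$.

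First I would identify the operator coefficients in $T_{j,i}$. For each $x\in G$ one has $\lm(f_i)\lm(x^{-1})=\lm(g_x)$, where $g_x\in C_c(G)$ is (a modular multiple of) the right translate of $f_i$; hence $u(f_i)u(x^{-1})=u(g_x)$ and
$$E(u(f_i)Tu(f_i)u(x^{-1}))=E(u(f_i)Tu(g_x)).$$
Since $f_i,g_x\in C_c(G)$, the very definition of the Fubini crossed product (Definition~\ref{d:Fubini}) applied to $T\in G\bar{\rtimes}_{\mc{F}}X$ gives $E(u(f_i)Tu(g_x))\in\alpha(X)$. Therefore, for each fixed $x$, the integrand $\frac{h_j(x)}{\Delta(x)}E(u(f_i)Tu(f_i)u(x^{-1}))u(x)$ lies in $\alpha(X)(VN(G)\ten1)\subseteq G\bar{\rtimes}X$.

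It remains to transfer this from the integrand to the integral. Here I would use weak*-closedness of $G\bar{\rtimes}X$ dualistically: let $\rho\in\mc{T}(\LT\ten H)$ annihilate $G\bar{\rtimes}X$. By Lemma~\ref{l:Landstad}, the pairing $\la T_{j,i},\rho\ra$ is computed as the scalar integral $\int_G\frac{h_j(x)}{\Delta(x)}\la E(u(f_i)Tu(f_i)u(x^{-1}))u(x),\rho\ra\,dx$, whose integrand vanishes identically in $x$ by the previous paragraph. Thus $\la T_{j,i},\rho\ra=0$ for every such $\rho$, and since $G\bar{\rtimes}X$ is weak* closed it coincides with the annihilator of its pre-annihilator; hence $T_{j,i}\in G\bar{\rtimes}X$, completing the argument. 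The one delicate point---and the step I expect to be the main obstacle---is exactly this last transfer: the integral defining $T_{j,i}$ is only a weak (Pettis-type) integral in the sense of Lemma~\ref{l:Landstad}, so one cannot simply approximate it in norm by Riemann sums lying in $G\bar{\rtimes}X$; the annihilator/duality argument is what makes the weak* closedness do the work.
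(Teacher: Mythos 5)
Your proof is correct and takes exactly the route the paper intends: Corollary~\ref{c:actionAP} is presented there as an immediate consequence of Theorem~\ref{t:Landstad}, the point being precisely that for $T\in G\bar{\rtimes}_{\mc{F}}X$ each Fej\'er integrand $E(u(f_i)Tu(f_i)u(x^{-1}))u(x)=E(u(f_i)Tu(g_x))u(x)$ with $g_x\in C_c(G)$ lies in $\alpha(X)(VN(G)\ten 1)\subseteq G\bar{\rtimes}X$, and the (weak) integral and iterated weak* limits remain in the weak* closed space $G\bar{\rtimes}X$. Your identification of the right translate $g_x$ and the annihilator argument via Lemma~\ref{l:Landstad} simply make explicit the details the paper leaves to the reader.
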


For inner amenable groups $G$ (e.g., $G$ discrete) the converse of Corollary \ref{c:actionAP} holds by Example \ref{e:3.7} and \cite[Corollary 4.8]{C2}. For discrete groups a similar result was also shown in \cite{Suz}. After the first version of this paper appeared, the converse of Corollary \ref{c:actionAP} was established by Andreou \cite[Theorem 5.12]{A}. Hence, a locally compact group $G$ as the AP if and only if every $W^*$-dynamical system has the slice map property.

%\begin{remark} Any von Neumann algebra $M$ of type III is of the form $\R\rtimes N$ for a type II$_\infty$ von Neumann algebra $N$. Here, the action is dual to that induced by the modular automorphism group of a normal semi-finite faithful weight $\vphi$ on $M$. Our integral representation in Theorem \ref{t:Landstad} therefore provides an explicit decomposition of any element of $M$ into its diagonals with respect to the dual action. 
%\end{remark}

Analogous results hold in the setting of $C^*$-dynamical systems, where we obtain norm convergence of the Fej\'{e}r representation. The proof relies on the following Lemma, which is inspired by \cite[Lemma 2.4]{Zach1}. In preparation, for every $T\in G\rtimes A$ and $\vphi\in(G\rtimes A)^*$, let $\om_{T,\vphi}:\Mcb\rightarrow\C$ be the functional
$$\om_{T,\vphi}(v)=\la(\h{\Theta}(v)\ten\id)(T),\vphi\ra, \ \ \ v\in\Mcb.$$

\begin{lem}\label{l:qcb} For every $T\in G\rtimes A$, and $\vphi\in(G\rtimes A)^*$, the functional $\om_{T,\vphi}\in\Qcb$.
\end{lem}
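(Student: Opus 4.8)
The plan is to show that $\om_{T,\vphi}$ is weak*-continuous on $\Mcb$; since $\Qcb$ is by definition the predual of $\Mcb$, this is exactly membership in $\Qcb$. First I note that $\om_{T,\vphi}$ is bounded: as $\h{\Theta}:\Mcb\to\NCBLTD$ is completely isometric, the module map $T\mapsto(\h{\Theta}(v)\ten\id)(T)$ has cb-norm at most $\norm{v}_{cb}$, whence $|\om_{T,\vphi}(v)|\leq\norm{\vphi}\,\norm{T}\,\norm{v}_{cb}$. By the Krein--Smulian theorem, a bounded functional on the dual space $\Mcb=(\Qcb)^*$ is weak*-continuous precisely when its restriction to the closed unit ball is weak*-continuous; so it suffices to prove that $\om_{T,\vphi}(v_i)\to\om_{T,\vphi}(v)$ whenever $(v_i)$ is a bounded net with $v_i\to v$ in $\sigma(\Mcb,\Qcb)$.

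The key maneuver is to transfer the problem from the $\LI$-bimodule picture on $\BLT$ to the multiplier picture on $VN(G)$, where the AP is directly available, while keeping the crossed-product leg intact. Write $B=G\rtimes A$, let $B''$ be its enveloping von Neumann algebra, and extend $\vphi$ to the (unique) normal functional $\tilde{\vphi}$ on $B''$. Since $\h{\alpha}(T)=\h{W}_{12}^*(1\ten T)\h{W}_{12}\in VN(G)\oten B''$ and, with $m_v=\h{\Theta}(v)|_{VN(G)}\in\mc{CB}^\sigma(VN(G))$, the identity of \cite[Proposition 4.3]{JNR} reads
$$(\h{\Theta}(v)\ten\id)(T)=(\om_0\ten\id)\bigl(\h{W}_{12}\,(m_v\ten\id_{B''})(\h{\alpha}(T))\,\h{W}_{12}^*\bigr),\qquad\om_0\in\TC\text{ a fixed state},$$
applying $\tilde{\vphi}$ and observing that $\om_0\ten\tilde{\vphi}$ is a normal functional on $\BLT\oten B''$ gives
$$\om_{T,\vphi}(v)=(\om_0\ten\tilde{\vphi})\bigl(\h{W}_{12}\,(m_v\ten\id_{B''})(\h{\alpha}(T))\,\h{W}_{12}^*\bigr).$$
Here one checks that $\h{W}_{12}\in\BLT\oten B''$ (its second leg lies in $VN(G)\ten 1\subseteq B''$), so conjugation by $\h{W}_{12}$ preserves $\BLT\oten B''$ and the pairing is legitimate.

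In this last display the only dependence on $v$ is through $m_v\ten\id_{B''}$ applied to the fixed element $\h{\alpha}(T)\in VN(G)\oten B''$; conjugation by the fixed unitary $\h{W}_{12}$ and evaluation against the fixed normal functional $\om_0\ten\tilde{\vphi}$ are weak*-continuous. I then invoke Haagerup--Kraus: by \cite[Proposition 1.7, Theorem 1.9(b)]{HK}, a bounded net $v_i\to v$ in $\sigma(\Mcb,\Qcb)$ satisfies $(m_{v_i}\ten\id_N)\to(m_v\ten\id_N)$ point-weak* for every von Neumann algebra $N$. Taking $N=B''$ yields $(m_{v_i}\ten\id_{B''})(\h{\alpha}(T))\to(m_v\ten\id_{B''})(\h{\alpha}(T))$ weak* in $VN(G)\oten B''$, and composing with the weak*-continuous operations above gives $\om_{T,\vphi}(v_i)\to\om_{T,\vphi}(v)$. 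This is the required continuity.

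The main obstacle --- and the reason the statement needs care --- is that $\vphi$ is only a bounded functional on the reduced crossed product $C^*$-algebra, not a normal functional on the ambient $\BLT\oten A''$; a direct appeal to weak* convergence of $(\h{\Theta}(v_i)\ten\id)(T)$ in $\BLT\oten A''$ would only test against normal functionals and would not capture a general $\vphi$. The resolution is precisely to retain the whole leg $B''=(G\rtimes A)''$ in the factorisation and extend $\vphi$ to a normal functional there, which is always possible, so that the AP enters only through the stable point-weak* convergence of the multipliers $m_{v_i}$ tested on $VN(G)\oten B''$, and normality of $\vphi$ is demanded only on $B''$.
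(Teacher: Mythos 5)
There is a genuine gap at the central step. You claim that $\vphi\in(G\rtimes A)^*$ extends to a normal functional $\tilde{\vphi}$ on $B''=(G\rtimes A)''$ and that this ``is always possible.'' If $B''$ denotes the bicommutant on $L^2(G,H_u)$, this is false: a bounded functional on a concretely represented $C^*$-algebra extends normally to its weak closure only if it already lies in the restriction of the predual --- e.g.\ for $G=\Z$, $A=\C$, a point evaluation on $C^*_\lm(\Z)\cong C(\T)$ has no normal extension to $VN(\Z)\cong L^\infty(\T)$. If instead you mean the universal enveloping von Neumann algebra $B^{**}$, the extension exists, but your formula no longer lives there: $\h{W}_{12}$ and conjugation by it act on the concrete Hilbert space $\LT\ten\LT\ten H_u$, not on $VN(G)\oten B^{**}$. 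Compounding this, the parenthetical ``$\h{W}_{12}\in\BLT\oten B''$ (its second leg lies in $VN(G)\ten 1$)'' is incorrect: from $\h{W}\xi(s,t)=\xi(ts,t)$ one sees $\h{W}\in VN(G)\oten\LI$, so the second leg of $\h{W}_{12}$ lies in $\LI\ten 1$, which is not contained in $(G\rtimes A)''$ in general; conjugation by $\h{W}_{12}$ therefore does not preserve $\BLT\oten B''$, and the pairing against $\om_0\ten\tilde{\vphi}$ is undefined. So the reduction of $\om_{T,\vphi}$ to a fixed normal functional tested on $(m_v\ten\id)$ of a fixed element does not go through as written.

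The paper resolves exactly this obstruction by a different device. One first takes $T=\tilde{\pi}_u\times u(f)$ with $f\in C_c(G,A)$ and $\vphi$ positive, chooses (via Eymard) $v_f\in A(G)\cap C_c(G)$ in the span of compactly supported positive definite functions with $v_f\equiv 1$ on $\supp(f)$, so that $T=(\h{\Theta}(v_f)\ten\id)(T)$; the key point is that $(\h{\Theta}(v_f)\ten\id)_*(\vphi)$ \emph{is} normal on $(G\rtimes A)''$ by Pedersen's results on compressions by compactly supported positive definite functions. One then writes $\la(\h{\Theta}(v_i)\ten\id)(T),\vphi\ra=\la(\h{\Theta}(v_i)\ten\id)(T),(\h{\Theta}(v_f)\ten\id)_*(\vphi)\ra$ and invokes the weak* convergence $(\h{\Theta}(v_i)\ten\id)(S)\to(\h{\Theta}(v)\ten\id)(S)$ (your Haagerup--Kraus/JNR step, which is sound) against this normal functional; the general case follows by norm density of $\tilde{\pi}_u\times u(C_c(G,A))$ in $G\rtimes A$ and polarization. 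Your Krein--Smulian reduction and the JNR factorization are fine, but without a substitute for the $v_f$-compression --- or some other mechanism producing a normal functional on the bicommutant that agrees with $\vphi$ on the relevant orbit --- the argument is incomplete.
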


\begin{proof} First suppose that $T=\tilde{\pi}_u\times u(f)$ for some $f\in C_c(G,A)$, and $\vphi\in(G\rtimes A)^*$ is positive. By \cite[Lemme 3.2]{E} there exists $v_f\in A(G)\cap C_c(G)$ in the span of compactly supported positive definite functions satisfying $v_f\equiv 1$ on $\mathrm{supp}(f)$. It follows that $T=(\h{\Theta}(v_f)\ten\id)(T)$. Also, by \cite[Corollary 7.6.9, Lemma 7.7.6]{Ped} we have $(\h{\Theta}(v_f)\ten\id)_*(\vphi)\in(G\rtimes A)''_*$. 

If $v_i\rightarrow v$ in $\sigma(\Mcb,\Qcb)$ then, as above, $(\h{\Theta}(v_i)\ten\id)(S)\rightarrow(\h{\Theta}(v)\ten\id)(S)$ weak* for all $S\in\BLT\oten\mc{B}(H_u)$, so that
\begin{align*}\om_{T,\vphi}(v_i)&=\la(\h{\Theta}(v_i)\ten\id)(T),\vphi\ra=\la(\h{\Theta}(v_i)\ten\id)(T),(\h{\Theta}(v_f)\ten\id)_*(\vphi)\ra\\
&\rightarrow\la(\h{\Theta}(v)\ten\id)(T),(\h{\Theta}(v_f)\ten\id)_*(\vphi)\ra=\la(\h{\Theta}(v)\ten\id)(T),\vphi\ra.
\end{align*}
Hence, $\om_{T,\vphi}\in\Qcb$. By norm density of $\tilde{\pi}_u\times u(C_c(G,A))$ in $G\rtimes A$ and polarization, we see that $\om_{T,\vphi}\in\Qcb$ for every $T\in G\rtimes A$, and $\vphi\in(G\rtimes A)^*$.

\end{proof}

\begin{thm}\label{t:cstarLandstad} Let $G$ be a locally compact group, and denote by $(f_i)\subseteq C_c(G)_{\norm{\cdot}_1=1}^+$ a symmetric bounded approximate identity for $\LO$. Then the following are equivalent.
\begin{enumerate}
\item $G$ has the AP.
\item There exists a net $(h_j)$ in $A(G) \cap C_c(G)$ such that for every $C^*$-dynamical system $(A,G,\alpha)$,
\begin{eqnarray}\label{e:intrepCstar} T=\lim_j\lim_i\int_G \frac{h_j(x)}{\Delta(x)}E(u(f_i)Tu(f_i)u(x^{-1}))u(x) \ dx, \ \ \ T\in G\rtimes A.
\end{eqnarray}
\end{enumerate}
\end{thm}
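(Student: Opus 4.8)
\emph{Overall strategy.} The plan is to follow the proof of \thmref{t:Landstad} almost verbatim, upgrading weak* to norm convergence, the decisive new input being \lemref{l:qcb}. Write $S_{i,j}(T)=\int_G\frac{h_j(x)}{\Delta(x)}E(u(f_i)Tu(f_i)u(x^{-1}))u(x)\,dx$. For (1)$\Rightarrow$(2) I first treat the inner limit over $i$ with a \emph{fixed} $h_j\in A(G)\cap C_c(G)$. As in \thmref{t:Landstad}, \lemref{l:Landstad} rewrites $S_{i,j}(T)$ as $(\om_{f_i,h_j}\ten\id)(\h{\alpha}(u(f_i)T)(W\ten1))$ and identifies its limit with $T\cdot h_j=(\h{\Theta}(h_j)\ten\id)(T)$. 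The point at which the $C^*$-setting improves on the $W^*$-setting is that $(u(f_i))$ is a genuine \emph{norm} approximate identity for the reduced crossed product $G\rtimes A$: using norm-continuity of the action one checks on the dense $*$-subalgebra $\tilde\pi_u\times u(C_c(G,A))$ that $u(f_i)(\tilde\pi_u\times u(f))u(f_i)=\tilde\pi_u\times u(g_i)$ with $g_i\to f$ in $L^1(G,A)$, and contractivity of $u(f_i)$ extends this to all of $G\rtimes A$. Feeding this into the operator-space estimates of \lemref{l:W}, \lemref{l:W2} and \corref{c:bounded} upgrades the inner convergence from weak* to norm, so that $\lim_i S_{i,j}(T)=T\cdot h_j$ in norm for every $T\in G\rtimes A$.

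\emph{The outer limit, and the main obstacle.} By \lemref{l:qcb}, for each $T\in G\rtimes A$ and $\vphi\in(G\rtimes A)^*$ the functional $v\mapsto\la(\h{\Theta}(v)\ten\id)(T),\vphi\ra$ belongs to $\Qcb$; hence any net $(v_\gamma)\subseteq A(G)\cap C_c(G)$ witnessing the AP, i.e.\ with $v_\gamma\to1$ in $\sigma(\Mcb,\Qcb)$, satisfies $(\h{\Theta}(v_\gamma)\ten\id)(T)\to T$ in the \emph{weak} topology of $G\rtimes A$, for every $C^*$-system and every $T$. The hard part is precisely to pass from this weak convergence to norm convergence while retaining a \emph{single} net: since an AP group need not be weakly amenable, the net $(v_\gamma)$ cannot be chosen bounded in $\Mcb$, so the usual uniform-boundedness/$\ep/3$ route is unavailable. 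I overcome this by convexification. The map $v\mapsto(\h{\Theta}(v)\ten\id)(T)$ is linear and, by \lemref{l:qcb}, $\sigma(\Mcb,\Qcb)$-to-weak continuous; since $1$ lies in the $\sigma(\Mcb,\Qcb)$-closure of $\{v_\gamma\}$, the element $T$ lies in the weak closure of $\mathrm{conv}\{(\h{\Theta}(v_\gamma)\ten\id)(T)\}$, which by Mazur's theorem coincides with its norm closure. Running this over finitely many pairs $(A,T)$ at once (via a direct sum of the relevant crossed products) produces a net $(h_j)$ of convex combinations of the $v_\gamma$ — again in $A(G)\cap C_c(G)$ — with $(\h{\Theta}(h_j)\ten\id)(T)\to T$ in norm for every $T$ and every system; to keep a single net valid for \emph{all} systems one runs this construction over a sufficiently large family of systems (handled by a cardinality restriction, or by passing to a universal system). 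Together with the inner limit, this establishes (2) for the net $(h_j)$.

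\emph{The converse.} For (2)$\Rightarrow$(1) I specialize to trivial actions: when $G$ acts trivially on a $C^*$-algebra $A$, one has $G\rtimes A=C^*_{\lm}(G)\iten A$ and $\h{\Theta}(h_j)\ten\id=m_{h_j}\ten\id_A$, so (2) together with the inner-limit identity above gives $(m_{h_j}\ten\id_A)(T)\to T$ in norm — a fortiori weak* — for every $T\in C^*_{\lm}(G)\iten A$ and every $A$. Viewing $C^*_{\lm}(G)\iten A$ as a weak*-dense subalgebra of $VN(G)\oten A''=G\bar{\rtimes}A''$, this is the weak* Fej\'er representation of \thmref{t:Landstad} on a weak*-dense subalgebra; the functionals $\om_{T,\vphi}$ of \lemref{l:qcb}, which separate the points of $\Mcb=(\Qcb)^*$ (a nonzero multiplier is detected on $C^*_{\lm}(G)$) and hence span a norm-dense subspace of $\Qcb$, let one run the converse argument of \thmref{t:Landstad} and conclude that $(\h{\Theta}(h_j))$ tends to $\id_{VN(G)}$ in the stable point-weak* topology. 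By \cite[Theorem~1.9(b)]{HK}, $G$ has the AP. The crux of the whole proof is the weak-to-norm upgrade of the outer limit for an unbounded net, and \lemref{l:qcb} is exactly what makes Mazur's theorem applicable there.
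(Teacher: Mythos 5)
Your forward direction is essentially the paper's argument. The inner limit is handled as in Theorem \ref{t:Landstad}: the paper upgrades it to norm convergence by asserting that the trace-class pairing converges uniformly over the unit ball of $\mc{T}(\LT\ten H_u)$, while you invoke the fact that $(u(f_i))$ is a norm approximate identity for $G\rtimes A$; both routes arrive at $\lim_i\int_G\frac{h_j(x)}{\Delta(x)}E(u(f_i)Tu(f_i)u(x^{-1}))u(x)\,dx=(\h{\Theta}(h_j)\ten\id)(T)$ in norm for each fixed $h_j$. The outer limit is obtained exactly as in the paper: Lemma \ref{l:qcb} gives weak convergence $(\h{\Theta}(v_\gamma)\ten\id)(T)\to T$, and one passes to convex combinations via Mazur. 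Your explicit treatment of how to extract a single net valid for all systems (direct sums over finite families of pairs) is in fact more careful than the paper's one-line ``taking convex combinations of the $h_j$''.

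The converse, however, contains a genuine gap. From the trivial actions you correctly extract $(m_{h_j}\ten\id_A)(T)\to T$ in norm for all $T\in C^*_\lm(G)\iten A$, but the ensuing detour --- embedding $C^*_\lm(G)\iten A$ weak*-densely into $VN(G)\oten A''$ and ``running the converse argument of Theorem \ref{t:Landstad}'' to conclude stable point-weak* convergence of $(m_{h_j})$ to $\id_{VN(G)}$ --- does not work. Stable point-weak* convergence requires $(m_{h_j}\ten\id_{\mc{B}(H)})(T)\to T$ weak* for \emph{every} $T\in VN(G)\oten\mc{B}(H)$, and convergence on a weak*-dense subalgebra propagates to the whole algebra only if the maps are uniformly bounded, i.e.\ $\sup_j\norm{h_j}_{\Mcb}<\infty$. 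Hypothesis (2) gives no such bound, and none can be imposed, since an AP group need not be weakly amenable. The same unboundedness defeats your density argument in $\Qcb$: knowing $\om(h_j)\to\om(1)$ for $\om$ in a norm-dense subspace of $\Qcb$ does not yield $\sigma(\Mcb,\Qcb)$-convergence of an unbounded net. The repair is immediate and is what the paper actually does: the statement you already have for $A=\mc{K}(H)$ with $H$ separable --- namely $(m_{h_j}\ten\id_{\mc{K}(H)})(T)\to T$ in norm for all $T\in C^*_\lm(G)\iten\mc{K}(H)$ --- is precisely convergence of $(m_{h_j})$ to $\id_{C^*_\lm(G)}$ in the stable point-norm topology, and \cite[Theorem 1.9 (c)]{HK} then gives the AP directly, with no passage to von Neumann algebras or to \cite[Theorem 1.9 (b)]{HK}.
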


\begin{proof} The proof goes through more or less unchanged from that of Theorem \ref{t:Landstad}. The final convergence simply needs to be upgraded to the norm topology. If $G$ has the AP, let $(h_j)$ be a net of real-valued functions in $A(G)\cap C_c(G)$ converging to 1 in the weak* topology of $\Mcb$. Then as above, for each $j$ we have
$$\lim_i\int_G\frac{h_j(x)}{\Delta(x)}\la E(u(f_i)Tu(f_i)u(x^{-1}))u(x),\rho\ra \ dx=\la(h_j\ten\id)\h{\alpha}(T),\rho\ra=\la(\h{\Theta}(h_j)\ten\id)(T),\rho\ra,$$
for all $T\in G\rtimes A$ and $\rho\in\mc{T}(\LT\ten H_u)$, where the convergence is uniform in $\rho$. Hence, 
$$\lim_i\int_G\frac{h_j(x)}{\Delta(x)} E(u(f_i)Tu(f_i)u(x^{-1}))u(x) \ dx=(\h{\Theta}(h_j)\ten\id)(T)$$
in the norm topology of $\mc{B}(\LT\ten H_u)$. By Lemma \ref{l:qcb}
$$\la(\h{\Theta}(h_j)\ten\id)(T),\vphi\ra=\om_{T,\vphi}(h_j)\rightarrow\om_{T,\vphi}(1)=\la T,\vphi\ra$$ 
for any $\vphi\in(G\rtimes A)^*$, that is, $(\h{\Theta}(h_j)\ten\id)(T)\rightarrow T$ weakly in $G\rtimes A$. Taking convex combinations of the $h_j$, we may assume that the later convergence is relative to the norm topology. Hence,
$$\lim_j\lim_i\int_G\frac{h_j(x)}{\Delta(x)}E(u(f_i)Tu(f_i)u(x^{-1}))u(x) \ dx = T$$
in the norm topology of $G\rtimes A$.

Conversely, suppose (2) holds for every $C^*$-dynamical system $(A,G,\alpha)$, and let $G$ act trivially on $\mc{K}(H)$ for a separable Hilbert space $H$. Then for every $T\in G\rtimes A=C_\lm^*(G)\iten \mc{K}(H)$ we have 
$$ T=\lim_j\lim_i\int_G \frac{h_j(x)}{\Delta(x)}E(u(f_i)Tu(f_i)u(x^{-1}))u(x) \ dx.$$
As in the proof of Theorem \ref{t:cstarLandstad}, for each $j$ it follows that
$$(\h{\Theta}(h_j)\ten\id)(T)=\lim_i\int_G\frac{h_j(x)}{\Delta(x)} E(u(f_i)Tu(f_i)u(x^{-1}))u(x) \ dx.$$
Thus, $T=\lim_j (\h{\Theta}(h_j)\ten\id)(T)$, implying that $(\h{\Theta}(h_j))$ converges to $\id_{C_\lm^*(G)}$ in the stable point norm topology. By \cite[Theorem 1.9 (c)]{HK}, $G$ has the AP.
\end{proof}

\begin{cor}\label{c:integralrepC*} Let $G$ be a locally compact group with the AP. Then any action of $G$ on any $C^*$-algebra has the slice map property. In particular, $C^*_\lm(G)$ has the strong operator space approximation property (SOAP).
\end{cor}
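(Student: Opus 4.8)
The plan is to establish the slice map property, i.e.\ $G\rtimes X=G\rtimes_{\mc{F}}X$ for every $G$-invariant closed subspace $X\subseteq A$, and then read off the SOAP from the special case of the trivial action on the compacts. The inclusion $G\rtimes X\subseteq G\rtimes_{\mc{F}}X$ is the preceding proposition, so the whole point is the reverse inclusion $G\rtimes_{\mc{F}}X\subseteq G\rtimes X$. Fix $T\in G\rtimes_{\mc{F}}X$. Since $G$ has the AP, Theorem~\ref{t:cstarLandstad} furnishes a net $(h_j)\subseteq A(G)\cap C_c(G)$ with
\[T=\lim_j\lim_i\int_G\frac{h_j(x)}{\Delta(x)}E(u(f_i)Tu(f_i)u(x^{-1}))u(x)\ dx\]
in norm. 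As $G\rtimes X$ is norm-closed, it suffices to show that, for each fixed $i$ and $j$, the inner integral $I_{i,j}$ appearing above lies in $G\rtimes X$; the two successive norm limits then keep $T$ inside $G\rtimes X$.

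To see $I_{i,j}\in G\rtimes X$, observe that $u(f_i)u(x^{-1})=u(g_x)$ for a suitable $g_x\in C_c(G)$ (a modular multiple of a right translate of $f_i$), so by Definition~\ref{d:cstarFubini} the membership $T\in G\rtimes_{\mc{F}}X$ gives $E(u(f_i)Tu(g_x))\in\alpha(X)$ for each $x$. Writing $u(f_i)=u(f_i^o)^*$ and invoking the identity $\alpha((\om_{g,f}\ten\id)(T))=E(u(f)^*Tu(g))$ from the proof of Proposition~\ref{p:UVcstar}, we obtain $\alpha^{-1}(E(u(f_i)Tu(g_x)))=(\om_{g_x,f_i^o}\ten\id)(T)\in X$. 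The map $x\mapsto g_x$ is norm-continuous into $\LT$ (continuity of translation), hence $x\mapsto\om_{g_x,f_i^o}$ is norm-continuous in $\TC$, and $F(x):=\frac{h_j(x)}{\Delta(x)}(\om_{g_x,f_i^o}\ten\id)(T)$ defines an element of $C_c(G,X)$. Consequently $I_{i,j}=(\tilde{\pi}_u\times u)(F)$, and the standard approximation of $C_c(G,X)$-functions by elementary tensors $\phi\ten y$ with $\phi\in C_c(G)$, $y\in X$ shows $(\tilde{\pi}_u\times u)(C_c(G,X))\subseteq G\rtimes X$, as required. I expect this verification --- that the $X$-valued integrand is norm-continuous and that its integral genuinely lands in $G\rtimes X$ rather than merely in the larger Fubini product --- to be the main technical obstacle; the rest is bookkeeping around the two norm limits.

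For the final assertion, apply the slice map property to the trivial action of $G$ on $\mc{K}(H)$ with $H$ separable, so that $G\rtimes\mc{K}(H)=C^*_\lm(G)\iten\mc{K}(H)$. Combining $G\rtimes X=G\rtimes_{\mc{F}}X$ with the description of the Fubini crossed product in Proposition~\ref{p:UVcstar} yields
\[C^*_\lm(G)\iten X=\{T\in C^*_\lm(G)\iten\mc{K}(H)\mid(\rho\ten\id)(T)\in X,\ \rho\in\TC\}\]
for every closed subspace $X\subseteq\mc{K}(H)$. This is precisely the (complete) slice map property for $C^*_\lm(G)$ with respect to subspaces of the compacts, which is equivalent to the SOAP by the standard characterization \cite{HK,ER}; it is the exact $C^*$-analogue of the weak* statement in Example~\ref{e:3.7}.
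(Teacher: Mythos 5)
Your argument is correct and is essentially the paper's: Corollary~\ref{c:integralrepC*} is stated there as an immediate consequence of Theorem~\ref{t:cstarLandstad}, the whole point being exactly what you verify --- that each inner Fej\'{e}r integral $I_{i,j}$ lies in $G\rtimes X$ because, writing $u(f_i)u(x^{-1})=u(g_x)$ and using the identity $\alpha((\om_{g,f}\ten\id)(T))=E(u(f)^*Tu(g))$ from Proposition~\ref{p:UVcstar}, the integrand is a norm-continuous, compactly supported $\alpha(X)$-valued function times $u(x)$, so the integral is $(\tilde{\pi}_u\times u)$ of an element of $C_c(G,X)$ and the two norm limits stay in the norm-closed subspace $G\rtimes X$. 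One small refinement for the final assertion: the slice map property for closed subspaces of $\mc{K}(H)$ is the standard characterization of the OAP rather than the SOAP, so to conclude the SOAP you should either apply your slice map property to the trivial action on $\mc{B}(H)$ (which your argument equally covers, since that is a bona fide $C^*$-dynamical system), or invoke directly the stable point-norm convergence $(\h{\Theta}(h_j)\ten\id)(T)\rightarrow T$ on $C^*_\lm(G)\iten\mc{B}(H)$ established in the proof of Theorem~\ref{t:cstarLandstad}, as in Haagerup--Kraus.
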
  

At a similar time as the first version of this paper, it was shown by Suzuki that for any $C^*$-dynamical system $(A,G,\alpha)$ where $G$ has the AP, $G\rtimes A$ has the SOAP if and only if $A$ has the SOAP \cite[Theorem 3.6]{Suz2}. In particular, this yields a different proof that $C^*_\lm(G)$ has the SOAP for any locally compact group $G$ with the AP. Contrary to the $W^*$-setting (see Corollary \ref{c:actionAP} and the subsequent discussion), the converse of Corollary \ref{c:integralrepC*} remains open in general.

%\begin{prop} Let $(A,G,\alpha)$ be a $C^*$-dynamical system with the slice map property. If $A$ has the SOAP, then $G\rtimes A$ has the SOAP.
%\end{prop}

%\begin{proof} Suppose $A\subseteq\mc{B}(H_A)$ is the universal representation of $A$. Let $B$ be a $C^*$-algebra and $X$ be a closed subspace of $B$. Given $T\in (G\rtimes A)\iten_F X\subseteq (G\rtimes A)\iten B$, for any $\rho\in\TC$ and $\om\in\mc{B}(H_A)^*$, $(\rho\ten\om\ten\id)(T)\in X$. In particular,
%$$(\rho\ten\id\ten\id)(T)\in A\iten_F X=A\iten X, \ \ \ \rho\in\TC,$$
%since $A$ has the SOAP. Under the canonical isomorphism \cite{W}
%\begin{equation}\label{e:iso}(G\rtimes_\alpha A)\iten B\cong G\rtimes_{\alpha\ten\id_B}(A\iten B)\end{equation}
%$A\iten X$ is a $G$-invariant subspace and it follows that $T\in G\rtimes_F(A\iten X)=G\rtimes (A\iten X)$, where the equality uses the slice map property of $(A,G,%\alpha)$. But, under (\ref{e:iso}), $G\rtimes (A\iten X)=(G\rtimes A)\iten X$, so that $T\in(G\rtimes A)\iten X$, as required.
%\end{proof}

\section{Applications}

\subsection{The approximation property and exactness} 

It is well-known that a discrete group $G$ with the AP is exact \cite{HK}. The argument proceeds by establishing the SOAP of $C^*_\lm(G)$, then appealing to the fact that $C^*$-algebras with the SOAP are exact, 
%(reference), 
and the fact that a discrete group $G$ with $C^*_\lm(G)$ exact is necessarily exact \cite[Theorem 5.2]{KW}. To the authors' knowledge, a similar argument through the operator space structure of $C^*_\lm(G)$ cannot be applied in the general locally compact setting. However, combining Corollary \ref{c:integralrepC*} with Proposition \ref{p:exactaction} we see that the implication persists. This answers Problem 9.4 (1) in \cite{Li}. 

\begin{thm}\label{t:APexact} A locally compact group with the AP is exact.\end{thm}

\begin{remark} The result of Theorem \ref{t:APexact} also appears in the independent very recent work of Suzuki \cite{Suz2}, of which we became aware after obtaining this result. 
\end{remark}

\begin{remark} It was shown in \cite[Corollary E]{BCL} that a weakly amenable second countable locally compact group is exact. We therefore obtain a different proof of this fact, valid for any locally compact group.\end{remark} 

\subsection{The Fej\'{e}r property for discrete dynamical systems} Let $(A,G,\alpha)$ be a $C^*$-dynamical system with $G$ discrete. Following \cite{BC1}, a function $F:G\times A\rightarrow A$, linear in the second variable, is a (completely) bounded multiplier of $(A,G,\alpha)$ if
$$M_{F}:G\rtimes A\ni \sum_{x\in G}a_x u(x)\mapsto \sum_{x\in G}F(x,a_x)u(x)\in G\rtimes A$$
extends to a (completely) bounded map. A function $F:G\times A\rightarrow A$ has \textit{finite $G$-support} if $F(x,\cdot)=0$ for all but finitely many $x\in G$. The $C^*$-dynamical system $(A,G,\alpha)$ has the \textit{Fej\'{e}r property} if there exists a net $(F_i)$ of bounded multipliers for which each $F_i$ has finite $G$-support and $M_{F_i}(T)\rightarrow T$ in norm for all $T\in G\rtimes A$. In \cite[Theorem 5.6]{BC1} the authors show that if $G$ is weakly amenable, then any $C^*$-dynamical system has the Fej\'{e}r property, and they ask whether the corresponding result is true for groups with the AP. 

\begin{thm}\label{t:BC} Let $G$ be a discrete group with the AP. Then every $C^*$-dynamical system $(A,G,\alpha)$ has the Fej\'{e}r property.
\end{thm}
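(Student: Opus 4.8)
The plan is to extract the required net of finite-support multipliers directly from the $C^*$-Fej\'er representation of Theorem~\ref{t:cstarLandstad}. Since $G$ has the AP, that theorem furnishes a net $(h_j)$ of real-valued functions in $A(G)\cap C_c(G)$ implementing (\ref{e:intrepCstar}). Because $G$ is discrete, $C_c(G)$ consists of finitely supported functions, $\Delta\equiv 1$, and we may take $f_i=\delta_e$ so that $u(f_i)=1$; the integral over $G$ becomes a sum and the representation collapses to
\begin{equation*}T=\lim_j\sum_{x\in G}h_j(x)E(Tu(x^{-1}))u(x)\end{equation*}
in norm, for every $C^*$-dynamical system $(A,G,\alpha)$ and every $T\in G\rtimes A$.

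With this in hand, I would define $F_j:G\times A\to A$ by $F_j(x,a)=h_j(x)a$. Each $F_j$ is linear in its second variable and has finite $G$-support, since $h_j$ is finitely supported. On an element $T=\sum_{x}a_xu(x)$ of the algebraic crossed product one computes $E(Tu(x^{-1}))=\alpha(a_x)$, whence
\begin{equation*}M_{F_j}(T)=\sum_{x\in G}h_j(x)a_xu(x)=\sum_{x\in G}h_j(x)E(Tu(x^{-1}))u(x).\end{equation*}
By the discrete specialization of the formula appearing in the proof of Theorem~\ref{t:cstarLandstad}, the right-hand side equals $(\h{\Theta}(h_j)\ten\id)(T)$. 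Thus $M_{F_j}$ agrees on the (norm-dense) algebraic crossed product with the restriction of $\h{\Theta}(h_j)\ten\id$ to $G\rtimes A$.

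Since $h_j\in A(G)\subseteq\Mcb$, the map $\h{\Theta}(h_j)$ is a completely bounded Fourier multiplier, so $\h{\Theta}(h_j)\ten\id$ is completely bounded on $\BLT\oten\mc{B}(H_u)$; it carries finitely supported elements to finitely supported elements of $G\rtimes A$ and hence, by norm continuity, maps $G\rtimes A$ into $G\rtimes A$. Consequently each $F_j$ is a completely bounded multiplier of $(A,G,\alpha)$ with finite $G$-support, and $M_{F_j}=(\h{\Theta}(h_j)\ten\id)|_{G\rtimes A}$. The displayed Fej\'er representation then reads precisely as $M_{F_j}(T)\to T$ in norm for every $T\in G\rtimes A$, which is the Fej\'er property.

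The only point requiring care is the identification of the combinatorial multiplier $M_{F_j}$ with the Fourier multiplier $\h{\Theta}(h_j)\ten\id$, together with the verification of complete boundedness and finite $G$-support; but both are immediate consequences of the structure established in Theorem~\ref{t:cstarLandstad} and the discreteness of $G$. I therefore expect the theorem to follow essentially as a corollary of the $C^*$-Fej\'er representation, with no substantial additional obstacle.
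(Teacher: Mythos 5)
Your proposal is correct and follows essentially the same route as the paper: extract the net $(h_j)$ from Theorem~\ref{t:cstarLandstad}, specialize to $f_i=\delta_e$ so the representation becomes $T=\lim_j\sum_{x}h_j(x)E(Tu(x^{-1}))u(x)$, set $F_j(x,a)=h_j(x)a$, and identify $M_{F_j}$ with $(\h{\Theta}(h_j)\ten\id)|_{G\rtimes A}$ by checking on finitely supported elements and extending by boundedness. No gaps.
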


\begin{proof} By Theorem \ref{t:cstarLandstad} there exists a net $(h_i)\subseteq A(G)\cap C_c(G)$ such that  
$$T=\lim_j\sum_{x\in G} h_j(x)E(Tu(x^{-1}))u(x), \ \ \ T\in G\rtimes A.$$
Letting $F_j(x,a)=h_j(x)a$, $x\in G$, $a\in A$, we obtain a net $(F_j)$ of finitely $G$-supported functions $G\times A\rightarrow A$ whose corresponding multipliers $M_{F_j}$ equal $(\Theta(h_j)\ten\id)$ (this is easily verified on finitely supported elements of the form $\sum_{x\in G}a_x u(x)$, the extension to $G\rtimes A$ following from boundedness). The proof of Theorem \ref{t:cstarLandstad} then shows
$$M_{F_i}(T)=(\Theta(h_j)\ten\id)(T)=\sum_{x\in G}h_j(x)E(Tu(x^{-1}))u(x), \ \ \ T\in G\rtimes A.$$
Hence, $M_{F_i}(T)\rightarrow T$ for all $T\in G\rtimes A$ and $(A,G,\alpha)$ has the Fej\'{e}r property.
\end{proof}

Alternatively, Theorem \ref{t:BC} can be deduced from the proof of \cite[Proposition 3.4]{Suz}, as pointed out to us by Suzuki, who learned about this question and our solution through an earlier version of the present paper.

It is natural to consider a generalization of the Fej\'{e}r property for locally compact groups using the recent theory of Herz--Schur multipliers of dynamical systems \cite{MTT}, and connect this to our slice map property for actions. This and related questions will appear in subsequent work.

\subsection{Structure of $VN(G)$-bimodules}
%Review special case of Theorem \ref{t:Landstad} in the case $\LI\bar{\rtimes}G=\BLT$...

Given a locally compact group $G$ and a closed left ideal $J\lhd\LO$, following \cite{AKT2}, we let
$$\mathrm{Ran}(J)=\overline{\mathrm{span}\{\Theta^r(f)_*(\rho)\mid f\in J, \ \rho\in\TC\}}^{\norm{\cdot}_{\TC}}$$
and
$$\mathrm{Bim}(J^{\perp})=\overline{\mathrm{span}\{xM_fy\mid f\in J^{\perp}, \ x,y\in VN(G)\}}^{w^*},$$
where $\Theta^r(f)_*$ is the pre-adjoint of the normal completely bounded map
$$\Theta^r(f):\BLT\ni T\mapsto \int_G f(s)\rho(s)T\rho(s^{-1}) \ ds\in\BLT.$$
In \cite{AKT2} it was shown that for any $G$ and $J\lhd\LO$, $\mathrm{Bim}(J^{\perp})\subseteq \mathrm{Ran}(J)^{\perp}$. The reverse inclusion was established in a few special cases, including abelian, compact, and weakly amenable discrete groups. The authors asked whether it holds in general. As an application of Theorem \ref{t:Landstad}, we now show that the reverse inclusion holds for all locally compact groups with the AP.

\begin{thm}\label{t:last} Let $G$ be a locally compact group with the AP. Then for any closed left ideal $J\lhd\LO$ we have $\mathrm{Bim}(J^{\perp})=\mathrm{Ran}(J)^{\perp}$.
\end{thm}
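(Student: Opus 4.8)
The plan is to realize $\BLT$ as a crossed product, match the three objects in the statement with the crossed-product notions of Section~3, and invoke the slice map property (\corref{c:actionAP}). I would use the classical identification $G\bar{\rtimes}_\alpha\LI\cong\BLT$ for the left translation action $\alpha_s=L_s$, realized concretely on $L^2(G)$: here $u(s)=\lambda(s)$, $VN(G)=\lambda(G)''$, $\alpha(\LI)$ is the algebra of multiplication operators $M_\phi$, and $E\colon\BLT\to\LI$ is the canonical operator-valued weight of Section~2 (for $G$ discrete, restriction to the diagonal). Since closed left ideals of $\LO$ are left-translation invariant, $\la L_s\psi,f\ra=\la\psi,L_{s^{-1}}f\ra=0$ for $\psi\in J^\perp$ and $f\in J$, so $J^\perp$ is a $G$-invariant weak* closed subspace of $\LI$.

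First I would show $\mathrm{Bim}(J^\perp)=G\bar{\rtimes}J^\perp$. The inclusion $\supseteq$ is clear, and for $\subseteq$ the relation $\lambda(s)M_\phi\lambda(t)=M_{L_s\phi}\lambda(st)$, combined with $L_s\phi\in J^\perp$, shows that each bimodule generator $xM_\phi y$ ($x,y\in VN(G)$, $\phi\in J^\perp$) already lies in the weak* closed right module $\overline{\la\alpha(J^\perp)(VN(G)\ten1)\ra}^{w^*}=G\bar{\rtimes}J^\perp$.

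The core step is to identify $\mathrm{Ran}(J)^\perp$ with the Fubini crossed product $G\bar{\rtimes}_{\mathcal F}J^\perp$. Since $\la\Theta^r(f)(T),\rho\ra=\la T,\Theta^r(f)_*(\rho)\ra$, we have $\mathrm{Ran}(J)^\perp=\{T\mid\Theta^r(f)(T)=0,\ f\in J\}$. Now $\rho(s)$ commutes with $\lambda(G)$, so $\Theta^r(f)=\int_G f(s)\Ad(\rho(s))\,ds$ commutes with left and right multiplication by $u(f'),u(g')$; moreover $\rho(s)M_\phi\rho(s^{-1})=M_{R_s\phi}$, so $\Ad(\rho(s))$ preserves $\LI$ and acts there by right translation. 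The decisive ingredient is the equivariance $E\circ\Ad(\rho(s))=R_s\circ E$, whence $E\circ\Theta^r(f)=(\,\cdot\,\star f)\circ E$, where $\psi\star f:=\int_G f(s)R_s\psi\,ds$. Granting this, for $T\in\mathrm{Ran}(J)^\perp$ and $f',g'\in C_c(G)$,
$$E(u(f')Tu(g'))\star f=E\big(\Theta^r(f)(u(f')Tu(g'))\big)=E\big(u(f')\,\Theta^r(f)(T)\,u(g')\big)=0,\qquad f\in J.$$
Writing $E(u(f')Tu(g'))=M_\psi$ and using $\la\psi\star f,g\ra=\la\psi,g\ast f\ra$ together with $\overline{\LO\ast J}=J$, the vanishing $\psi\star f=0$ for all $f\in J$ forces $\psi\in J^\perp$. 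Thus $E(u(f')Tu(g'))\in\alpha(J^\perp)$ for all $f',g'\in C_c(G)$, i.e. $T\in G\bar{\rtimes}_{\mathcal F}J^\perp$ by Definition~\ref{d:Fubini}.

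Combining these, since $G$ has the AP, \corref{c:actionAP} gives $G\bar{\rtimes}_{\mathcal F}J^\perp=G\bar{\rtimes}J^\perp$, so
$$\mathrm{Ran}(J)^\perp\subseteq G\bar{\rtimes}_{\mathcal F}J^\perp=G\bar{\rtimes}J^\perp=\mathrm{Bim}(J^\perp),$$
which with the reverse inclusion of \cite{AKT2} yields equality. The main obstacle is the equivariance $E\circ\Ad(\rho(s))=R_s\circ E$: it is immediate for discrete $G$ (where $E$ is the diagonal expectation and $\rho(s)\delta_t=\delta_{ts^{-1}}$ gives the identity directly), and in general I would derive it from the strong right invariance of the Plancherel weight already used in Section~2, verifying it on the weak* dense set of elements $M_\phi\lambda(r)$ and extending by normality.
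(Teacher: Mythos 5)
Your proposal is correct and follows essentially the same route as the paper: the identification $\BLT\cong G\bar{\rtimes}\LI$, the key equivariance of the operator-valued weight $E$ under $\mathrm{Ad}(\rho(s))$ (which the paper derives via Haagerup's maps $E_\psi$ and the relation $(\lm(r)\ten\rho(r))\h{W}=\h{W}(1\ten\rho(r))$ — a workable alternative to your Plancherel-weight sketch), and the Fej\'{e}r machinery, invoked by you in the packaged form of Corollary \ref{c:actionAP} rather than by inserting the representation from Theorem \ref{t:Landstad} directly. The only other cosmetic difference is that you verify $E(u(f')Tu(g'))\in J^{\perp}$ by a direct $L^1$-module computation where the paper cites \cite[Lemma 3.3]{AKT2}.
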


\begin{proof} First, we make the identification $\BLT\cong G\bar{\rtimes}\LI$ via the extended (right) co-product
$$\Gam^r:\BLT\ni T\mapsto V(T\ten 1)V^*\in\BLT\oten\LI.$$
Under this identification it follows that the dual co-action $\h{\alpha}:\BLT\rightarrow VN(G)\oten\BLT$ is precisely the extended (left) co-product
$$\h{\Gam}^l:\BLT\ni T\mapsto \h{W}^*(1\ten T)\h{W}\in VN(G)\oten\BLT.$$
Next, for any $r,s,t\in G$, and $\xi\in L^2(G\times G)$ we have
\begin{align*}(\lm(r)\ten\rho(r))\h{W}\xi(s,t)&=\h{W}\xi(r^{-1}s,tr)\Delta(r)^{1/2}\\
&=\xi(ts,tr)\Delta(r)^{1/2}\\
&=(1\ten\rho(r))\xi(ts,t)\\
&=\h{W}(1\ten\rho(r))\xi(s,t).
\end{align*}
Thus, $(\lm(r)\ten\rho(r))\h{W}=\h{W}(1\ten\rho(r))$. For any $r\in G$, positive definite $\psi\in A(G)$, and $T\in\BLT$ we therefore have
\begin{align*}\mathrm{Ad}(\rho(r))((\psi\ten\id)\h{\alpha}(T))&=(\psi\ten\id)((1\ten\rho(r))\h{W}^*(1\ten T)\h{W}(1\ten\rho(r^{-1})))\\
&=(\psi\ten\id)(\h{W}^*(1\ten \rho(r)T\rho(r^{-1}))\h{W})\\
&=(\psi\ten\id)\h{\alpha}(\rho(r)T\rho(r^{-1})).\end{align*}
In the notation of \cite[Theorem 3.1 (a)]{Haa}, the above reads $E_\psi\circ\Ad(\rho(r))=\Ad(\rho(r))\circ E_\psi$. Since the operator-valued weight $E=(\vphi\ten\id)\circ\h{\alpha}$ coincides with Haagerup's construction in \cite{Haa}, it satisfies 
$$E=\sup_{\psi<<\delta_e}E_\psi,$$
where $\psi$ ranges through the positive definite elements of $A(G)$ and $<<$ is the standard positive definite ordering (see \cite[Theorem 3.1 (b), Proposition 2.4]{Haa}). It follows that for every $f,g\in C_c(G)$ and $T\in\BLT$,
$$\rho(r)E(\lm(f)T\lm(g))\rho(r^{-1})=E(\lm(f)\rho(r)T\rho(r^{-1})\lm(g)), \ \ \ r\in G,$$
and hence, by normality of $E(\lm(f)(\cdot)\lm(g))$, that
$$\Theta^r(h)(E(\lm(f)T\lm(g)))=\int_G h(r)\rho(r)E(\lm(f)T\lm(g))\rho(r^{-1})=E(\lm(f)\Theta^r(h)(T)\lm(g))$$
for all $h\in\LO$.

Now, suppose $T\in\mathrm{Ran}(J)^{\perp}$. As $\mathrm{Ran}(J)^{\perp}$ is a $VN(G)$-bimodule, we have $\lm(f)T\lm(f)\lm(x^{-1})\in\mathrm{Ran}(J)^{\perp}$ for all $f\in C_c(G)$ and $x\in G$. 
Since $\mathrm{Ran}(J)^{\perp}=\mathrm{Ker}(\Theta^r(J))$ \cite[Lemma 3.1]{AKT2}, and
$$\Theta^r(h)(E(\lm(f)T\lm(f)\lm(x^{-1})))=E(\lm(f)\Theta^r(h)(T)\lm(f)\lm(x^{-1}))=0$$
for all $h\in J$, we have $E(\lm(f)T\lm(f)\lm(x^{-1}))\in\mathrm{Ran}(J)^{\perp}\cap\LI=J^{\perp}$ by \cite[Lemma 3.3]{AKT2}. The Fej\'{e}r representation for $T$ from Theorem \ref{t:Landstad} then implies that $T\in\mathrm{Bim}(J^{\perp})$.

\end{proof}

After the first version of this paper appeared, Theorem \ref{t:last} was obtained using different techniques by Andreou \cite[Remark 5.4]{A2}.

\section*{Acknowledgements}

The authors are grateful for the reviewers' comments, which improved the overall presentation of the paper. The authors would also like to thank Mahmood Alaghmandan, Mehrdad Kalantar, Hung-Chang Liao, and Ivan Todorov for helpful discussions at various points during the project. The first author was partially supported by the NSERC Discovery Grant RGPIN-2017-06275, and the second author was partially supported by the NSERC Discovery Grant RGPIN-2014-06356. 

%\subsection{Sketches}

%\begin{remark} Equation  may be phrased in terms of the co-multiplications as
%$$(\Gam^r\ten\id)\h{\Gam}^r=(\id\ten\Sigma)(\h{\Gam}^r\ten\id)\Gam^r,$$
%which, in terms of the right fundamental unitaries $V\in\LIQHP\oten\LIQ$ and $\h{V}\in\LIQ'\oten\LIQH$, reads
%$$V_{12}\h{V}_{13}(T\ten1\ten1)\h{V}_{13}^*V_{12}^*=(\id\ten\Sigma)(\h{V}_{12}V_{13}(T\ten1\ten1)V_{13}^*\h{V}_{12}^*)=\h{V}_{13}%V_{12}(T\ten1\ten1)V_{12}^*\h{V}_{13}^*$$
%for all $T\in\BLTQ$. We therefore have
%$$V_{12}^*\h{V}_{13}^*V_{12}\h{V}_{13}(T\ten1\ten1)=(T\ten1\ten1)V_{12}^*\h{V}_{13}^*V_{12}\h{V}_{13}, \ \ \ \forall T\in\BLTQ,$$
%which implies $V_{12}^*\h{V}_{13}^*V_{12}\h{V}_{13}\in\C1\oten\LIQ\oten\LIQH$. Hence, there exists a unitary $U\in\LIQ\oten\LIQH$ %satisfying
%$$V_{12}\h{V}_{13}=\h{V}_{13}V_{12}U_{23}.$$
%Ask Sutanu...$U$ should be $W$.
%\end{remark}

\end{spacing}

\end{document}